\newcommand{\R}{\mathbb R}
\newcommand{\Rr}{\mathcal R}
\newcommand{\Ee}{\mathcal E}
\newcommand{\Pp}{\mathcal P}
\newcommand{\T}{\mathcal T}
\newcommand{\Uu}{\mathcal U}
\newcommand{\dFD}{d_{\mathit{FD}}}
\newcommand{\dFC}{d_{\mathit{FC}}}
\newcommand{\dU}{d_{\mathit{U}}}
\newcommand{\dI}{d_{\mathit{I}}}
\DeclareMathOperator{\pr}{pr}
\DeclareMathOperator{\im}{im}
\newlength{\xywd}
\newcommand{\xto}[2][]{%
  \sbox{0}{$\scriptstyle#1$}%
  \xywd=\wd0
  \sbox{0}{$\scriptstyle#2$}%
  \ifdim\wd0>\xywd \xywd=\wd0 \fi
  \xymatrix@C\dimexpr\xywd+1em\relax{{}\ar[r]^{#2}_{#1}&{}}%
}
\def\havard#1{\todo[color=green]{#1}}
\title{Tight Quasi-Universality of Reeb Graph Distances}
\author{Ulrich Bauer}
{Department of Mathematics and Munich Data Science Institute, Technical University of Munich (TUM),
Germany}
{ulrich.bauer@tum.de}
{https://orcid.org/0000-0002-9683-0724}
{}
\author{Håvard Bakke Bjerkevik}{Department of Mathematics, SUNY Albany, Albany, NY, USA}{hbjerkevik@albany.edu}{https://orcid.org/0000-0001-9778-0354}{Austrian Science Fund (FWF) grant number P 33765-N}
\author{Benedikt Fluhr}{Faculty of Mathematics, Bielefeld University,
Germany}{}{https://orcid.org/0000-0002-5730-0106}{}
\authorrunning{U. Bauer, H. B. Bjerkevik, and B. Fluhr} %
\keywords{Reeb graphs,
contour trees,
merge trees,
distances,
universality,
interleaving distance,
functional distortion distance,
functional contortion distance} %
\begin{document}

\maketitle

\begin{abstract}
We establish tight bi-Lipschitz bounds certifying quasi-universality (universality up to a constant factor) for various distances between Reeb graphs: the interleaving distance, the functional distortion distance, and the functional contortion distance.  The definition of the latter distance is a novel contribution, and for the special case of contour trees we also prove strict universality of this distance.  Furthermore, we prove that for the special case of merge trees the functional contortion distance coincides with the interleaving distance, yielding universality of all four distances in this case.
\end{abstract}

\section{Introduction}

The \emph{Reeb graph} is a topological signature of real-valued functions, first considered in the context of Morse theory \cite{MR15613} and subsequently applied to the analysis of geometric shapes \cite{DBLP:conf/siggraph/HilagaSKK01,DBLP:journals/cga/ShinagawaK91}.
It describes the connected components of level sets of a function, and for Morse functions on compact manifolds or PL functions on compact polyhedra it turns out to be a finite topological graph with a function that is monotonic on the edges.
If the domain of the function is simply-connected, then the Reeb graph is contractible, hence a tree, and is therefore often called a \emph{contour tree}.
In topological data analysis, Reeb graphs are used for surveying functions, and also in a discretized form termed \emph{Mapper} \cite{DBLP:conf/spbg/SinghMC07} for the analysis of point cloud data, typically high-dimensional or given as an abstract finite metric space.

An important requirement for topological signatures is the ability to quantify their similarity, which is typically achieved by means of an extended pseudometric on the set of isomorphism classes of signatures under consideration, referred to as a distance.
In order for such a distance to be practical, it should be resilient to noise and perturbations of the input data, which is formalized by the property of \emph{stability}:
small perturbations of the data lead to small perturbations of the signature.
Mathematically speaking, the signature is a Lipschitz-continuous map between metric spaces, and often the Lipschitz constant is assumed to be $1$, meaning that the map is non-expansive.
Previous examples of distances between Reeb graphs satisfying stability include the \emph{functional distortion distance} \cite{bauer_measuring}, the \emph{interleaving distance} \cite{deSilva_categorified}, and the \emph{Reeb graph edit distance} \cite{bauer_edit}.
While stability guarantees that similarity of data sets is preserved, it does not provide any guarantees regarding the \emph{discriminativity} of the distance on the signature.
Indeed, a certain loss of information is inherent and even desired for most signatures; in fact, a key strength of topological signatures is their invariance to reparametrizations or isometries of the input data, independent of the metric used to distinguish non-isomorphic signatures.
Thus, given any signature map defined on some metric space of possible data, such as the space of real-valued functions on a fixed domain with the uniform metric, one stable distance is considered more discriminative than another if it assigns larger or equal distances to all possible pairs of signatures.
For example, the functional distortion distance is an upper bound for the interleaving distance and thus more discriminative in that sense.
The opposite relation holds up to a constant, as the two distances are bi-Lipschitz equivalent~\cite{bauer_strong_eq}.
One may now ask if a given distance is \emph{universal}, meaning that it is both stable and an upper bound for all stable distances, and thus the most discriminative among all stable distances.
This can be expressed by the universal property of a quotient metric \cite{MR2585803,scoccola2020locally}, giving rise to the name `universal'.
Since there is only one such distance, we refer to it as \emph{the} universal distance.
Perhaps surprisingly, neither the interleaving distance nor the functional distortion distance is universal, while the \emph{Reeb graph edit distance}  \cite{bauer_edit} turns out to be universal.

These results raise the question of whether the mentioned distances are \emph{quasi-universal}, i.e., bi-Lipschitz equivalent to the universal distance.
We address this question by proving lower and upper Lipschitz bounds relating all three mentioned distances, together with the novel \emph{functional contortion distance}, a slight variation of the functional distortion distance.
It has a simple definition, is more discriminative than the functional distortion and interleaving distances while still being stable, and in fact coincides with the universal distance when restricted to contour trees, as we also show in this paper.

If $d$ and $d'$ are distances on Reeb graphs and $C\in [0,\infty)$, we use the notation $d\leq Cd'$ to express that for all Reeb graphs $(F,f)$ and $(G,g)$, the inequality $d(F,G)\leq Cd'(F,G)$ holds.

\begin{theorem}[Quasi-universality of Reeb graph distances]
\label{thm:main}
The functional contortion distance $\dFC$, the functional distortion distance $\dFD$, and the interleaving distance $\dI$ on Reeb graphs are quasi-universal (bi-Lipschitz equivalent to the universal distance).
Specifically, %
we have
\begin{align*}
{\dFC} & \leq {\dU} \leq 3 {\dFC} &
{\dFD} & \leq {\dU} \leq 3 {\dFD} &
{\dI} & \leq {\dU} \leq 5 {\dI}\\
{\dFD} & \leq {\dFC} \leq 3 {\dFD} &
{\dI} & \leq {\dFC} \leq 3 {\dI} &
{\dI} & \leq {\dFD} \leq 3 {\dI}.
\end{align*}
\end{theorem}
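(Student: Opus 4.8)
The plan is to derive all six two-sided inequalities from a small list of conversion constructions, each turning a witness for one distance (a contortion pair, a distortion pair, a $\delta$-interleaving, or a common space $(Z,\tilde f,\tilde g)$) into a witness for another, possibly at the cost of a constant factor, together with the trivial chain $d_I\le d_{FD}\le d_{FC}\le d_U$. Concretely I would establish: (i) every $\delta$-contortion pair is a $\delta$-distortion pair, so $d_{FD}\le d_{FC}$; (ii) stability of $d_{FC}$ --- any $(Z,\tilde f,\tilde g)$ with $\|\tilde f-\tilde g\|_\infty\le\delta$ realizing $F$ and $G$ as Reeb quotients yields a $\delta$-contortion pair, so $d_{FC}\le d_U$; (iii) every $\delta$-distortion pair yields a common space with $\|\tilde f-\tilde g\|_\infty\le 3\delta$, so $d_U\le 3d_{FD}$; (iv) every $\delta$-interleaving yields a $3\delta$-contortion pair, so $d_{FC}\le 3d_I$; (v) every $\delta$-distortion pair induces a $\delta$-interleaving, so $d_I\le d_{FD}$ (essentially \cite{bauer_strong_eq,bauer_measuring}); and (vi) every $\delta$-interleaving yields a common space with $\|\tilde f-\tilde g\|_\infty\le 5\delta$, so $d_U\le 5d_I$. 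Granting (i)--(vi), the remaining inequalities are bookkeeping: $d_U\le 3d_{FC}$ follows from $d_U\le 3d_{FD}$ and $d_{FD}\le d_{FC}$; $d_{FC}\le 3d_{FD}$ from $d_{FC}\le d_U\le 3d_{FD}$; $d_{FD}\le 3d_I$ from $d_{FD}\le d_{FC}\le 3d_I$; and $d_{FD}\le d_U$, $d_I\le d_U$, $d_I\le d_{FC}$ from the chain.

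For (i), I would recall that $d_f(x,x')$ is the length of the shortest closed interval $J$ for which $x$ and $x'$ lie in one connected component of $f^{-1}(J)$ (and likewise for $d_g$); note such a $J$ contains $f(x)$ and $f(x')$. Given $(x,y),(x',y')\in C(\phi,\psi)$, pick $J=[a,a+L]$ realizing $d_f(x,x')=L$. Pushing a connecting path forward by $\phi$ and using $\|f-g\circ\phi\|_\infty\le\delta$ (\cref{Rem:cont_infty_bound}) connects $\phi(x)$ to $\phi(x')$ inside $g^{-1}([a-\delta,a+L+\delta])$; and for whichever of the two correspondence pairs has the form $x=\psi(y)$ (rather than $\phi(x)=y$), the contortion condition connects $y$ to $\phi(x)$ inside $g^{-1}(I_\delta(f(x)))\subseteq g^{-1}([a-\delta,a+L+\delta])$ because $f(x)\in J$. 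Concatenating gives $d_g(y,y')\le L+2\delta=d_f(x,x')+2\delta$, and the symmetric argument through $\psi$ gives the reverse bound. For (iv), given $\phi\colon F\to\mathcal U_\delta G$ and $\psi\colon G\to\mathcal U_\delta F$, I would first replace them by genuine continuous maps $\phi'\colon F\to G$, $\psi'\colon G\to F$ with $\phi'(x)\in\Phi(x)$ and $\psi'(y)\in\Psi(y)$ --- obtained by lifting $\phi,\psi$ through the Reeb quotients $q_{\mathcal U_\delta G}$, $q_{\mathcal U_\delta F}$ (possible for maps out of a $1$-complex after subdivision) and then projecting to $G$, resp.\ $F$. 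If $\phi'(x)=y$, then $|f(x)-g(y)|\le\delta$ and $\psi'(y)\in\Psi(\Phi(x))$, so the interleaving's connectedness axiom puts $x$ and $\psi'(y)$ in one component of $f^{-1}(I_{2\delta}(f(x)))\subseteq f^{-1}(I_{3\delta}(g(y)))$, which is exactly the $3\delta$-contortion condition; the other condition is symmetric.

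For the universal-distance bounds (iii) and (vi) the idea is to build $Z$ by gluing $F$ and $G$ --- and, for (vi), also the intermediate smoothings $\mathcal U_\delta F$, $\mathcal U_\delta G$ --- along the given correspondence, with interpolated functions so that computing connected components of level sets collapses the ``$G$-part'' for $\tilde f$ and the ``$F$-part'' for $\tilde g$. In (iii) the factor $3$ comes from $\delta$ for $\|f-g\circ\phi\|_\infty$ together with the $2\delta$ metric-distortion slack; in (vi) I would split $d_U(F,G)\le d_U(F,\mathcal U_\delta F)+d_U(\mathcal U_\delta F,\mathcal U_\delta G)+d_U(\mathcal U_\delta G,G)$, bounding each outer term by $\delta$ via the space $\mathcal T_\delta F$ carrying the two functions $(p,t)\mapsto f(p)$ and $(p,t)\mapsto f(p)+t$ (which differ pointwise by $|t|\le\delta$ and have Reeb quotients $F$ and $\mathcal U_\delta F$), and bounding the middle term by $3\delta$ using the comparison maps supplied by the interleaving.

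I expect two main obstacles. The first, recurring in (ii) and (iv), is replacing set-valued correspondences by honest continuous maps without destroying the connectedness conditions, which forces careful subdivision of the CW structures and a lifting argument through Reeb quotients. The second, and the crux, is verifying in (iii) and (vi) that the glued space $Z$ actually satisfies $(\Rr Z,\Rr\tilde f)\cong(F,f)$ and $(\Rr Z,\Rr\tilde g)\cong(G,g)$: this is where the distortion / interleaving hypotheses are genuinely consumed and where the constants $3$ and $5$ are forced. Step (v) I would cite, but it can also be checked directly by sending a $\delta$-distortion pair $(\phi,\psi)$ to the interleaving maps $x\mapsto[(\phi(x),\,f(x)-g(\phi(x)))]$ and $y\mapsto[(\psi(y),\,g(y)-f(\psi(y)))]$ and verifying the triangle and connectedness axioms.
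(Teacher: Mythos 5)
Your decomposition into six base inequalities ($d_I\le d_{FD}\le d_{FC}\le d_U$, $d_U\le 3d_{FD}$, $d_U\le 5d_I$, $d_{FC}\le 3d_I$) matches the paper exactly, and steps (i) and (v) are essentially the arguments given there (for (i) compare \cref{Thm:FD<=FC}, whose proof also pushes a connecting interlevel-set component through $\phi$ and uses the contortion condition to attach $y$ to $\phi(x)$). Steps (ii) and (iv) are also in the spirit of the paper's proofs, but your claim that $\phi\colon F\to\mathcal{U}_\delta G$ can be \emph{exactly} lifted through the Reeb quotient $q_{\mathcal{U}_\delta G}\colon\mathcal{T}_\delta G\to\mathcal{U}_\delta G$ to produce a continuous $\phi'$ with $\phi'(x)\in\Phi(x)$ for \emph{all} $x$ is not substantiated, and I do not think it is true in general: Reeb quotient maps need not admit continuous sections along a given map, even from a $1$-complex. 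The paper avoids this by only prescribing $\phi'$ on a discrete sample $S$, then extending along paths inside slightly enlarged interlevel sets, which costs an arbitrary $\epsilon$ and gives a $(3\delta+3\epsilon)$-contortion (resp.\ a $(\delta+2\epsilon)$-contortion in \cref{Thm:FC<=U}). Since $d_{FC}$ is defined as an infimum, the $\epsilon$ washes out, so the final bounds are unaffected — but you should replace the ``exact lifting'' step with this sampling argument.

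The genuine gap is in steps (iii) and (vi), which you yourself flag as the crux. For (iii), the paper does not ``glue'' $F$ and $G$ but takes the explicit subspace
$Z=\bigcup_{x\in F} K(x)\times\phi(K(x))\;\cup\;\bigcup_{y\in G}\psi(K(y))\times K(y)\subseteq F\times G$
with $K(x)$ an appropriate component of $f^{-1}[a,a+2\delta]$ containing $\psi\phi(x)$, and the whole work is in proving that the fibers of $\pr_F|_Z$ and $\pr_G|_Z$ are connected; none of this appears in your sketch. For (vi), your triangle-inequality route
$d_U(F,G)\le d_U(F,\mathcal{U}_\delta F)+d_U(\mathcal{U}_\delta F,\mathcal{U}_\delta G)+d_U(\mathcal{U}_\delta G,G)$
is a genuinely different strategy and the outer bounds by $\delta$ are correct (via $\mathcal{T}_\delta F$ with the two functions $(p,t)\mapsto f(p)$ and $(p,t)\mapsto f(p)+t$). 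However, the middle bound $d_U(\mathcal{U}_\delta F,\mathcal{U}_\delta G)\le 3\delta$ is unsupported: the interleaving data consists of $\phi\colon F\to\mathcal{U}_\delta G$ and $\psi\colon G\to\mathcal{U}_\delta F$, which are not comparison maps \emph{between} $\mathcal{U}_\delta F$ and $\mathcal{U}_\delta G$; their induced maps $\phi_\delta,\psi_\delta$ land in $\mathcal{U}_{2\delta}$ of the other space. Deducing the middle bound from $d_U\le 3d_I$ applied to the smoothed graphs would be circular, since that inequality is strictly stronger than the $d_U\le 5d_I$ you are trying to prove. The paper instead proves $d_U\le 5d_I$ directly (\cref{Thm:U<=5I}) by a single $Z\subseteq F\times G$ of the same flavor as in (iii), with $C(x)=K_x(f^{-1}(I_{2\delta}(f(x))))$ replacing $K(x)$ and $\Phi(C(x))$ replacing $\phi(K(x))$, again devoting most of the proof to connectedness of the fibers (using \cref{Lem:Phi_preserves_conn}). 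Without that explicit construction, steps (iii) and (vi) remain open in your proposal.
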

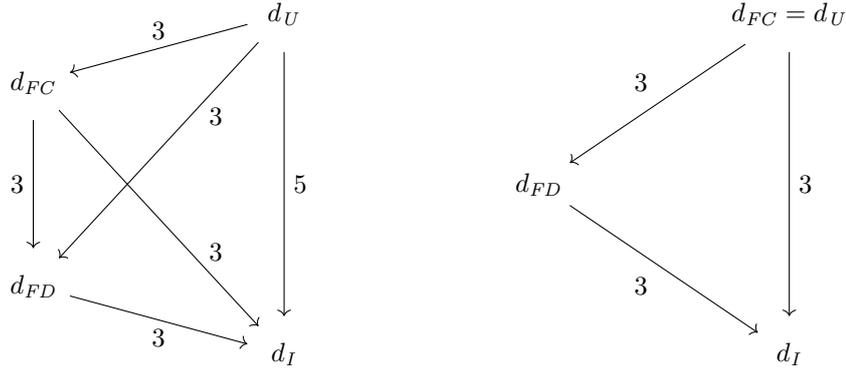
\begin{figure}[t!]
\centering
\begin{subfigure}[t]{0.34\textwidth}
\centering
\begin{tikzpicture}[scale=1.5]
\coordinate (I) at (2.2,0){};
\node at (I){$\dI$};
\coordinate (D) at (0,0.6){};
\node at (D){$\dFD$};
\coordinate (C) at (0,2.4){};
\node at (C){$\dFC$};
\coordinate (U) at (2.2,3){};
\node at (U){$\dU$};
\draw[shorten >=.5cm,shorten <=.5cm,<-] (I) to node[below] {$3$} (D);
\draw[shorten >=.5cm,shorten <=.5cm,<-] (I) to (C);
\node at (1.6,0.9){$3$};
\draw[shorten >=.5cm,shorten <=.5cm,<-] (I) to node[right] {$5$} (U);
\draw[shorten >=.5cm,shorten <=.5cm,<-] (D) to node[left] {$3$} (C);
\draw[shorten >=.5cm,shorten <=.5cm,<-] (D) to (U);
\node at (1.6,2.1){$3$};
\draw[shorten >=.5cm,shorten <=.5cm,<-] (C) to node[above] {$3$} (U);
\end{tikzpicture}
\caption{Inequalities for Reeb graphs.}
\label{Fig:ineq}
\end{subfigure}\hspace{2cm}
\begin{subfigure}[t]{0.34\textwidth}
\centering
\begin{tikzpicture}[scale=1.5]
\coordinate (I) at (2.2,0){};
\node at (I){$\dI$};
\coordinate (D) at (0,1.5){};
\node at (D){$\dFD$};
\coordinate (U) at (2.2,3){};
\node at (U){$\dFC = \dU$};
\draw[shorten >=.5cm,shorten <=.5cm,<-] (I) to (D);
\node at (0.9,0.6){$3$};
\draw[shorten >=.5cm,shorten <=.5cm,<-] (I) to node[right] {$3$} (U);
\draw[shorten >=.7cm,shorten <=.5cm,<-] (D) to (U);
\node at (0.9,2.4){$3$};
\end{tikzpicture}
\caption{Inequalities for contour trees.}
\label{Fig:ineq_contour}
\end{subfigure}\hspace{2cm}
\caption{An arrow from $d$ to $d'$ labeled $C$ means that the inequalities $d'(F,G) \leq d(F,G)\leq Cd'(F,G)$ hold for all Reeb graphs (in (a)) or contour trees (in (b)).}
\end{figure}
Of these inequalities, only $\dI \leq \dFD \leq 3 \dI$ \cite[Theorem 16]{bauer_strong_eq}, $\dI\leq \dU$, and $\dFD\leq \dU$ were known, the latter two being equivalent to stability of $\dI$ \cite[Theorem 4.4]{deSilva_categorified} and of $\dFD$ \cite[Theorem 4.1]{bauer_measuring}.
See \cref{Fig:ineq} for a visualization of the inequalities in \cref{thm:main}.
One can easily check that all the inequalities in the theorem follow from only six of them, namely 
\begin{align*}
\dI &\leq \dFD \leq \dFC \leq \dU &
\dU &\leq 3\dFD&
\dU &\leq 5\dI&
\dFC &\leq 3\dI.
\end{align*}
We further show that all these bounds are indeed tight. Here tightness of a bound $d\leq Cd'$ means that $C = \inf \{ C' \mid d \leq C' d' \}$;
equivalently, $C = \sup \{ \frac{d(x,y)}{d'(x,y)} \mid d'(x,y)>0 \}$.
Previously, only the stability bounds $\dI\leq \dFD\leq \dU$ were known to be tight. Furthermore, the interleaving and functional distortion distances were known not to be universal. Specifically, an example in \cite[Propositions~7~and~8]{bauer_edit} shows that ${\dU}$ and ${\dFD}$ can differ by a factor of $2$, and hence
$\inf \{ C \mid {\dU} \leq C {\dFD} \} \geq 2$.

We establish sharper bounds for the special case of contour trees, leading to universality of the functional contortion distance.

\begin{restatable}[Universality of the functional contortion distance for contour trees]{theorem}{thmContour}
\label{thm:contour}
The functional contortion distance is universal for contour trees:
given two contour trees $(F,f)$ and $(G,g)$, we have \[{\dFC(F,G)} = {\dU(F,G)}.\]
\end{restatable}
This theorem gives us a simpler set of inequalities for contour trees than what we have for general Reeb graphs; see \cref{Fig:ineq_contour}.

Finally, we show that the interleaving distance of merge trees, considered as a special case of contour trees with only downward branches, coincides with the functional contortion distance, establishing the universality of all four distances in this particular setting.
\begin{restatable}[Universality of the interleaving distance for merge trees]{theorem}{thmMerge}
\label{thm:merge}
The interleaving distance is universal for merge trees:
given two merge trees $(F,f)$ and $(G,g)$, we have \[{\dI(F,G)} = {\dFD(F,G)} = {\dFC(F,G)} = {\dU(F,G)}.\]
\end{restatable}

Previously, only the equality ${\dI(F,G)} = {\dFD(F,G)}$ was known \cite[Theorem 6.2]{bauer_measuring_arxiv}.
We prove \cref{thm:main} in \cref{sec:reeb}, \cref{thm:contour} in \cref{sec:contour}, and \cref{thm:merge} in \cref{sec:merge}.

As a corollary of \cref{thm:main}, we get that any distance that is bi-Lipschitz equivalent to the interleaving distance is quasi-universal.
In particular, this property holds for the truncated interleaving distances introduced by Chambers et al. \cite[Cor. 5.4]{chambers_truncated}.
\begin{corollary}
For $m\in [0,1)$, the distance $\dI^m$ in \cite{chambers_truncated} is quasi-universal on Reeb graphs with finite support.
\end{corollary}

In \cite{cardona_et_al:LIPIcs.SoCG.2022.24}, an $\ell^p$-generalization of the interleaving distance is introduced for unbounded merge trees with finitely many nodes, for all $p\in [1,\infty]$.  It satisfies a universal property analogous to the one considered here.  The case $p=\infty$ yields a variant of our universality result for the interleaving distance between unbounded merge trees with finitely many nodes.

\section{Preliminaries}
\label{sec:preliminaries}

\begin{definition}[Reeb graph]
  \label{def:ReebGraph}
A Reeb graph is a pair $(F,f)$ where $F$ is a non-empty connected topological space and $f \colon F \to \R$ a continuous function, such that $F$ admits the structure of a $1$-dimensional CW complex for which
\begin{itemize}
\item $f$ restricts to an embedding on each $1$-cell, and
\item for every bounded interval $I\subset \R$, the preimage $f^{-1}(I)$ intersects a finite number of cells.
\end{itemize}
\end{definition}
We often refer to $F$ as a Reeb graph without referring explicitly to the function $f$.
A morphism (isomorphism) of Reeb graphs is a value-preserving continuous map (homeomorphism).

\begin{remark}
  \label{rem:pathConn}
  Suppose $(F, f)$ is a Reeb graph,
  let $I \subseteq \R$ be a closed interval,
  and fix the structure of a CW complex on $F$ as in \cref{def:ReebGraph}.
  As we may subdivide any $1$-cell
  whose interior intersects $f^{-1}(\partial I)$,
  the preimage $f^{-1}(I)$ also admits the structure of a CW complex.
  Thus, the preimage $f^{-1}(I)$ is locally path-connected
  and therefore the connected components and the path-components
  of $f^{-1}(I)$ coincide.
\end{remark}

\begin{definition}[Contour tree]
A contour tree is a contractible Reeb graph.
\end{definition}

A path in a space $X$ is a function $\rho\colon [0,1] \to X$.
We say that $\rho$ goes from $\rho(0)$ to $\rho(1)$, and we sometimes abuse terminology by identifying $\rho$ and $\im \rho$, so that we consider a path as a subspace of $X$ instead of a function.

Contour trees admit a minimal connected subspace containing any set of points, which is a property we will make good use of. 
\begin{definition}
\label{def:B(X)}
Let $F$ be a contour tree, and let $x, x'\in F$.
Since $F$ is Hausdorff, there is an injective path $\rho$ from $x$ to $x'$.
We define the \emph{connecting segment of $x$ and $x'$ in $F$} as
\begin{equation*}%
B(x,x')= \im\rho.
\end{equation*}
If $x=x'$, we let $B(x,x') = \{x\}$.
For any subset $X\subseteq F$, we define the \emph{connecting subtree of $X$ in $F$} as
\begin{equation*}%
B(X)= \bigcup_{x,x'\in X} B(x,x').
\end{equation*}
\end{definition}
Since $F$ is a contractible $1$-dimensional CW-complex, $B(x,x')$ contains any path between $x$ and $x'$.
One can show that $B(X)$ is (path-)connected (if $y\in B(x_1,x_2)$ and $z\in B(x'_1,x'_2)$, use $B(x_1,x_2)$, $B(x_2,x'_1)$ and $B(x'_1,x'_2)$), and that any connected subspace $C\subseteq H$ containing $X$ must contain $B(X)$ ($C$ must contain a path from $x$ to $x'$ for any $x,x'\in X$).

Contour trees further specialize to \emph{merge trees},
which can be thought of as upside down trees
in the sense that
its branches grow from top to bottom.

\begin{definition}[Merge tree]
  \label{def:mergeTree}
  A \emph{merge tree} is a Reeb graph $(F,f)$,
  such that $F$ admits the structure of a $1$-dimensional CW complex
  as in \cref{def:ReebGraph}
  with the additional property
  that each $0$-cell is the lower boundary point of at most a single $1$-cell.
\end{definition}

We note that this definition allows for both
unbounded merge trees,
which is an implied necessity of the definition in \cite{MBW13},
and bounded merge trees.
We verify that merge trees are indeed special cases of contour trees:

\begin{lemma}
Any merge tree is a contour tree.
\end{lemma}

\begin{proof}
Assuming $(F,f)$ is a merge tree, we need to show that $F$ is contractible.
By the Whitehead theorem, it suffices to show that all homotopy groups except in degree zero are trivial.
For degree at least two, this follows from $F$ being $1$-dimensional, which leaves degree $1$.
Let $\rho: S^1\to F$ be any continuous map.
The image of $\rho$ lies in $f^{-1}(I)$ for some finite interval $I$, so by definition of Reeb graph, it only intersects a finite number of cells.
Let $T$ be the minimal subcomplex of $F$ containing these cells.
It follows from the property of each $0$-cell being the lower boundary of at most one $1$-cell that $T$ is a (finite) tree.
This means that $\rho$ is homotopic to a constant map.
Since $\rho$ was arbitrary, we get that the fundamental group of $F$ is trivial, which completes the proof.
\end{proof}

\begin{definition}[Induced Reeb graph]
\label{def:indReebGraph}
Let $X$ be a topological space, $f\colon X\to \R$ a continuous function.
Let $\Rr X$ be the quotient space
$X / {\sim_f}$,
with $x \sim_f y$ iff $x$ and $y$
belong to the same connected component of some level set
of $f \colon X \rightarrow \R$, and
let $q_{\Rr X} \colon X \rightarrow \Rr X$
be the natural quotient map,
and let $\Rr f \colon \Rr X \rightarrow \R$
be the unique continuous map such that the diagram
\begin{equation*}
\begin{tikzcd}
X
\arrow[rr, "q_{\Rr X}", bend left]
\arrow[rd, "f"']
&
&
\Rr X
\arrow[ld, "\Rr f"]
\\
&
\R
\end{tikzcd}
\end{equation*}
commutes. If $(\Rr X, \Rr f)$ is a Reeb graph, we say that it is the Reeb graph \emph{induced by $(X,f)$}.
\end{definition}

We say that two or more points are \emph{connected in} some space if there is a connected component of that space containing all those points.
For $t \in \R$ and $\delta \ge 0$, we write 
\[
[t\pm\delta] := [t-\delta,t+\delta] \subset \R
\]
for the closed interval of radius $\delta$ centered at $t$.

\sloppy
Let $f \colon X \rightarrow \R$ be a continuous function
and let $\delta \geq 0$.
  We define the \emph{$\delta$-thickening} of $X$ as 
  \begin{align*}
  \mathcal{T}_{\delta} X &:= X \times [-\delta, \delta]
  ,&
  \mathcal{T}_{\delta} f &\colon \mathcal{T}_{\delta} X \rightarrow \R, \,
  (p, t) \mapsto f(p) + t.
  \end{align*}
  Moreover,
  let
  \[{\tau^{\delta}_X \colon X \rightarrow \mathcal{T}_{\delta} X,\,
  p \mapsto (p, 0)}\]
  be the natural embedding of $X$ into its $\delta$-thickening.
Now let $g \colon G \rightarrow \R$ be a Reeb graph
and consider its $\delta$-thickening
$\mathcal{T}_{\delta} g \colon \mathcal{T}_{\delta} G \rightarrow \R$.
  We define the \emph{$\delta$-smoothing} of $G$ as 
  \begin{align*}
  \mathcal{U}_{\delta} G &:= \mathcal{R} \mathcal{T}_{\delta} G
  ,&
  \mathcal{U}_{\delta} g &:= \mathcal{R} \mathcal{T}_{\delta} g
  \colon \mathcal{R} \mathcal{T}_{\delta} G \rightarrow \R.
  \end{align*}
  Moreover,
  let
  \[q_{\mathcal{U}_{\delta} G} \colon
    \mathcal{T}_{\delta} G \rightarrow \mathcal{U}_{\delta} G =
    \mathcal{R} \mathcal{T}_{\delta} G
  \]
  be the natural quotient map as in \cref{def:indReebGraph},
  and let
  \[\zeta^{\delta}_G \colon
    G \xto{\tau^{\delta}_G}
    \mathcal{T}_{\delta} G \xto{q_{\mathcal{U}_{\delta} G}}
    \mathcal{U}_{\delta} G
  \]
  be the natural map of $G$ into its $\delta$-smoothing,
  which is the composition of $\tau^{\delta}_G$
  and $q_{\mathcal{U}_{\delta} G}$.

Now suppose $f \colon F \rightarrow \R$
is another Reeb graph
and that $\phi \colon F \rightarrow \mathcal{U}_{\delta} G$
is a continuous map.
Identifying points in $\mathcal{U}_{\delta} G = \mathcal{R} \mathcal{T}_{\delta} G$ with subsets of $\mathcal{T}_{\delta} G$ via the quotient map $q_{\mathcal{U}_{\delta} G} \colon \mathcal{T}_{\delta} G \to \mathcal{U}_{\delta} G$,
the map $\phi$ induces a set-valued map
\[
  \Phi := \pr_G{} %
  \circ \phi \colon F \to \mathcal P(G) 
\]
from $F$ to the power set of $G$,
where
$\operatorname{pr}_G \colon
\mathcal{T}_{\delta} G = G \times [-\delta, \delta] \rightarrow G,\,
(q, t) \mapsto q$
is the projection onto $G$.
Moreover, suppose
$\psi \colon G \rightarrow \mathcal{U}_{\delta} F$
is another continuous map, and define the set-valued map analogously as
\begin{equation*}
  \Psi := \pr_F{} %
  \circ \psi \colon G \to \mathcal P(F).
\end{equation*}

\begin{definition}[Interleaving distance $\dI$ \cite{deSilva_categorified}]
  \label{def:interleaving}
  We say that the pair of maps
  $\phi \colon F \rightarrow \mathcal{U}_{\delta} G$
  and
  $\psi \colon G \rightarrow \mathcal{U}_{\delta} F$
  is a \mbox{\emph{$\delta$-interleaving}} of $(F, f)$ and $(G, g)$
  if the triangles
  \begin{equation*}
    \begin{tikzcd}
      F
      \arrow[rr, bend left, "\phi"]
      \arrow[rd, "f"']
      & &
      \mathcal{U}_{\delta} G
      \arrow[ld, "\mathcal{U}_{\delta} g"]
      \\
      &
      \R
    \end{tikzcd}
    \quad
    \text{and}
    \quad
    \begin{tikzcd}
      G
      \arrow[rr, bend left, "\psi"]
      \arrow[rd, "g"']
      & &
      \mathcal{U}_{\delta} F
      \arrow[ld, "\mathcal{U}_{\delta} f"]
      \\
      &
      \R
    \end{tikzcd}
  \end{equation*}
  commute and the following two conditions are satisfied:
  \begin{itemize}
  \item
    For any $x \in F$, $x$ and $\Psi(\Phi(x))$
    are connected in
    $f^{-1} [f(x)\pm 2\delta]$.
  \item
    For any $y \in G$, $y$ and $\Phi(\Psi(y))$ are connected in
    $g^{-1} [g(y)\pm 2\delta]$.
  \end{itemize}
  The \emph{interleaving distance},
  denoted $\dI(F,G)$,
  is defined as the infimum of the set of $\delta$
  admitting a $\delta$-interleaving between $(F,f)$ and $(G,g)$.
\end{definition}

Note that for any $t \in \R$ the map $f^{-1}[t\pm\delta] \to (\mathcal{T}_{\delta} f)^{-1}(t)$ given by $x \mapsto (x, t - f(x))$ is a homeomorphism, with the inverse given by the restriction of $\pr_F \colon \mathcal{T}_{\delta} F \to F$.
In particular, the points of $\mathcal{U}_{\delta} F$, which are the connected components of level sets of $\mathcal{T}_{\delta} f$, are in bijection with connected components of interlevel sets of $f$.
Hence, the connectedness condition for an interleaving is equivalent to requiring that 
$\psi_\delta \circ \phi = \tau^{2\delta}_F$ and $\phi_\delta \circ \psi = \tau^{2\delta}_G$, where $\phi_\delta$ is the induced map $\mathcal{U}_{\delta} F \to \mathcal{U}_{2\delta} G, \, [x,s] \mapsto [\phi(x),s]$ and similarly for $\psi_\delta$.

\begin{definition}[Functional distortion distance $\dFD$ \cite{bauer_measuring}]
\label{Def:dFD}
Let $(F,f)$ and $(G,g)$ be two Reeb graphs.
Given a pair $(\phi,\psi)$ of maps $\phi \colon F \to G$ and $\psi \colon G \to F$,
consider the correspondence
\[
  C(\phi,\psi) = \{ (x,y) \in F \times G \mid \phi(x) = y \textrm { or } x = \psi(y)\}.
\]
The pair $(\phi,\psi)$ is a \emph{$\delta$-distortion pair} if $\|f-g\circ\phi\|_\infty\leq \delta$, $\|f\circ\psi-g\|_\infty\leq \delta$, and for all
$
  (x,y),(x',y')\in C(\phi,\psi)
$
we have
\[
  \left| d_f(x,x') - d_g(y,y')\right| \leq 2\delta,
\]
where $d_f(x,x')$ denotes the infimum length of any interval $I$ such that $x$ and $x'$ are connected in $f^{-1}(I)$, and similarly for $d_g$.
The \emph{functional distortion distance}, denoted $\dFD(F,G)$, is defined as the infimum of all $\delta$ admitting a $\delta$-distortion pair between $(F,f)$ and $(G,g)$.
\end{definition}

\begin{definition}[Functional contortion distance $\dFC$]
\label{def:contortionDistance}
Let $(F,f)$ and $(G,g)$ be two Reeb graphs.
A pair $(\phi,\psi)$ of maps $\phi \colon F \to G$ and $\psi \colon G \to F$ is a \emph{$\delta$-contortion pair} between $(F,f)$ and $(G,g)$ if the following symmetric conditions are satisfied.
\begin{itemize}
\item For any $x \in F$ and $y\in \psi^{-1}(x)$, the points $\phi(x)$ and $y$ are connected in $g^{-1}[f(x)\pm\delta]$.
\item For any $y \in G$ and $x\in \phi^{-1}(y)$, the points $\psi(y)$ and $x$ are connected in $f^{-1}[g(y)\pm\delta]$.
\end{itemize}
The \emph{functional contortion distance}, denoted $\dFC(F,G)$, is defined as the infimum of the set of $\delta$ admitting a $\delta$-contortion pair between $(F,f)$ and $(G,g)$.
\end{definition}
The definition of $\dFC$ is arguably easier to work with than that of $\dFD$, since to verify that $(\phi,\psi)$ is a $\delta$-contortion pair, one only has to check one condition for each element of $C(\phi,\psi)$, while to verify that $(\phi,\psi)$ is a $\delta$-distortion pair, one needs to check a condition for each pair of elements of $C(\phi,\psi)$.

\begin{proposition}
The functional contortion distance $\dFC$ satisfies the triangle inequality.
\end{proposition}

\begin{proof}
Suppose that $(F,f)$, $(G,g)$, and $(H,h)$ are Reeb graphs, $(\phi,\psi)$ is a $\epsilon$-contortion pair between $(F,f)$ and $(G,g)$, and $(\mu,\nu)$ is a $\delta$-contortion pair between $(G,g)$ and $(H,h)$.
To prove that $\dFC$ satisfies the triangle inequality, it suffices to show that $(\mu\circ \phi,\psi\circ \nu)$ is a $(\epsilon+\delta)$-contortion pair between $(F,f)$ and $(G,g)$.

Let $x\in F$ and $y\in \psi\circ \nu(x)$.
We need to show that $y$ and $\mu\circ \phi(x)$ are connected in $h^{-1}[f(x)\pm(\epsilon+\delta)]$; the second condition in the definition of $\dFC$ follows by symmetry.
Since $(\phi,\psi)$ is a $\epsilon$-contortion pair, we know that $\nu(y)$ and $\phi(x)$ are connected in $g^{-1}[f(x)\pm\epsilon]$.
Let $K$ be the connected component of $g^{-1}[f(x)\pm\epsilon]$ containing $\nu(y)$ and $\phi(x)$.
Then $\mu(K)$ contains $\mu\circ\nu(y)$ and $\mu\circ \phi(x)$, it is connected, and by \cref{Rem:cont_infty_bound}, $\mu(K)\subseteq h^{-1}[f(x)\pm(\epsilon+\delta)]$.
Because $(\mu,\nu)$ is a $\delta$-contortion pair, $y$ and $\mu\circ\nu(y)$ are connected in $h^{-1}[g(\nu(y))\pm\delta]\subseteq h^{-1}[f(x)\pm(\epsilon+\delta)]$.
It follows that $y$ and $\mu\circ\phi(x)$ are connected in $h^{-1}[f(x)\pm(\epsilon+\delta)]$, which concludes the proof.
\end{proof}

\begin{remark}
\label{Rem:cont_infty_bound}
Let $(\phi,\psi)$ be a $\delta$-contortion pair between $(F,f)$ and $(G,g)$.
For any $x\in F$ we have $\phi(x)\in g^{-1}[f(x)\pm\delta]$, which implies $\|f(x)-g\circ \phi(x)\|\leq \delta$.
It follows that $\|f-g\circ\phi\|_\infty\leq \delta$, and by a symmetric argument we also get $\|g-f\circ\psi\|_\infty\leq \delta$.
\end{remark}

\begin{figure}
\begin{tikzpicture}[scale=0.8]
\draw (-2,-3) to (-2,3);
\draw (-2.1,-2) to (-1.9,-2);
\node[left] at (-2.1,-2){$-2$};
\draw (-2.1,-1) to (-1.9,-1);
\node[left] at (-2.1,-1){$-1$};
\draw (-2.1,-0) to (-1.9,0);
\node[left] at (-2.1,0){$0$};
\draw (-2.1,1) to (-1.9,1);
\node[left] at (-2.1,1){$1$};
\draw (-2.1,2) to (-1.9,2);
\node[left] at (-2.1,2){$2$};
\coordinate (x) at (-.65,0){};
\node[left] at (x){$x$};
\coordinate (z) at (.65,0){};
\node[left] at (z){$z$};
\coordinate (y) at (4,0){};
\node[right] at (y){$y$};
\foreach \X in {x,z,y}
\draw[color=black,fill=black] (\X) circle (.05);
\draw[thick] (0,-3) to (0,-2);
\draw[thick] (0,2) to (0,3);
\draw[thick,color=red] (x) to [out=-90,in=120] (0,-2) to [out=60,in=-90] (z);
\draw[thick] (x) to [out=90,in=-120] (0,2) to [out=-60,in=90] (z);
\node[above] at (0,3){$F$};
\draw[thick] (4,-3) to (4,3);
\node[above] at (4,3){$G$};
\draw[->,shorten >=.15cm,shorten <=.15cm] (x) to [out=25,in=155] (y);
\draw[->,shorten >=.15cm,shorten <=.15cm] (y) to [out=205,in=-25] (z);
\end{tikzpicture}
\caption{Reeb graphs $(F,f)$ and $(G,g)$. If $\phi\colon F\to G$ and $\psi\colon G\to F$ are a $1$-distortion pair, we allow $\phi(x)=y$ and $\psi(y)=z$ because there is a path from $x$ to $z$ in $f^{-1}[-2,0]$. However, in this case $(\phi,\psi)$ is not a $1$-contortion pair, because $x$ and $z$ are not connected in $f^{-1}[g(y)\pm1] = f^{-1}[-1,1]$.}
\end{figure}
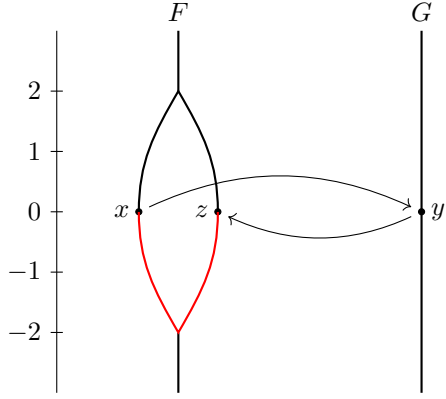

\begin{definition}[Universal distance $\dU$ \cite{MR2585803,bauer_edit}]
\label{def:universalDistance}
Let $(F,f)$ and $(G,g)$ be two Reeb graphs.
The \emph{universal distance}, denoted $\dU(F, G)$, is defined as the infimum
of $\|\tilde f-\tilde g\|_\infty$
taken over all spaces~$Z$ and functions $\tilde f, \tilde g \colon Z \to \R$ such that $(\Rr Z, \Rr \tilde f) \cong (F,f)$ and $(\Rr Z, \Rr \tilde g) \cong (F,g)$. 
\end{definition}

The distance $\dU$ is readily seen to be universal.
Recall that the \emph{Reeb graph edit distance}~\cite{bauer_edit} is also universal, providing an alternative explicit construction for the universal distance.

\section{Bi-Lipschitz bounds for Reeb graph distances}
\label{sec:reeb}

This section is devoted to proving \cref{thm:main}.
In \cref{subsec:easy_ineq} we prove $\dI\leq \dFD\leq \dFC\leq \dU$, in \cref{subsec:U<=3FD} we prove $\dU\leq 3\dFD$, in \cref{subsec:U<=5I} we prove $\dU\leq 5\dI$, and in \cref{subsec:FC<=3I} we prove $\dFC\leq 3\dI$.
As mentioned before, these inequalities together imply the theorem.

\subsection{The inequalities $\dI\leq \dFD\leq \dFC\leq \dU$}
\label{subsec:easy_ineq}

The following lemma and its proof are similar to \cite[Proposition 3]{bauer_edit}, but without the assumption that $X$ is compact.
\begin{lemma}
\label{Lem:connected_preimage}
Let $(F,f)$ be a Reeb graph induced by a map $\hat f\colon X\to \R$, let $q\colon X\to F$ be the associated quotient map, and suppose $K\subseteq F$ is connected. Then $q^{-1}(K)$ is connected.
\end{lemma}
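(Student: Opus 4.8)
The plan is to build the argument on one structural fact and two elementary facts about quotient maps. The structural fact is that $q$ is a quotient map (by the definition of $\Rr X$) each of whose fibers is a connected component of a level set of $\hat f$, and hence connected: indeed the $\sim_f$\nobreakdash-class of a point $x$ is precisely the connected component of $x$ in $\hat f^{-1}(\hat f(x))$.

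I would then invoke the following two general facts. \emph{First:} if $p\colon P\to Q$ is a quotient map with connected fibers and $Q$ is connected, then $P$ is connected; for in any clopen partition $P=A\sqcup B$ each fiber lies entirely in $A$ or in $B$ (being connected), so $A$ and $B$ are saturated, whence $p(A)$ and $p(B)$ are open and partition $Q$, forcing one of them to be empty. \emph{Second:} the restriction of a quotient map $p\colon P\to Q$ to $p^{-1}(C)\to C$ over a \emph{closed} subset $C\subseteq Q$ is again a quotient map; this follows from the closed-set characterization of quotient maps, since $p^{-1}(C)$ is closed in $P$, so that a set $V\subseteq C$ with $p^{-1}(V)$ closed in $p^{-1}(C)$ is closed in $P$, hence $V$ is closed in $Q$ and a fortiori in $C$. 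Combining the two: if $K\subseteq F$ is \emph{closed} and connected, then $q$ restricts to a quotient map $q^{-1}(K)\to K$ with connected fibers onto a connected space, so $q^{-1}(K)$ is connected.

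Finally I would reduce an arbitrary connected $K$ to the closed case using that connected subsets of the graph $F$ are path-connected. Given $x_0,x_1\in q^{-1}(K)$, choose a path in $K$ joining $q(x_0)$ and $q(x_1)$; its image $K'\subseteq K$ is compact, hence closed in $F$, and connected, so $q^{-1}(K')$ is connected by the previous step, contains $x_0$ and $x_1$, and lies inside $q^{-1}(K)$; since $x_0,x_1$ were arbitrary, $q^{-1}(K)$ is connected. I expect the steps requiring the most care to be the verification that $q$ still restricts to a quotient map over the closed set $K$ without any compactness assumption---this is precisely where \cite[Proposition 3]{bauer_edit} used compactness of $X$ to obtain a closed, hence quotient, restriction---together with the (elementary but not entirely automatic) fact that connected subsets of the one-dimensional CW-complex $F$ are path-connected.
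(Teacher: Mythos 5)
Your proof is correct in its essentials and takes a genuinely different route from the paper's. The paper argues by contradiction in one step: assuming $q^{-1}(K)$ disconnected with a clopen separation $X_1\sqcup X_2$, it asserts that $q(X_1)$ and $q(X_2)$ are open in $K$ by the quotient topology, hence intersect by connectedness of $K$, which splits a fiber. The step ``$q(X_1),q(X_2)$ open in $K$'' is exactly the subtle point you isolate: it is what one gets automatically if the restriction $q|_{q^{-1}(K)}\colon q^{-1}(K)\to K$ is itself a quotient map, which is not automatic for arbitrary $K$. Your proof handles this explicitly: first establish the result for \emph{closed} $K$, where the restriction is a quotient map, then combine with the standard fact that a quotient map with connected fibers onto a connected base has connected total space (both of your auxiliary facts and their proofs are correct). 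The reduction of an arbitrary connected $K$ to compact, hence closed, subsets $K'$ via paths, together with the observation that pairwise membership in a common connected subset forces connectedness, is also sound.

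The one point you rightly flag as needing care is the claim that connected subsets of the one-dimensional CW-complex $F$ are path-connected. This is true (for a locally finite graph, a connected subset contains, for any two of its points, the arc consisting of all separating points in a minimal connecting subgraph), but it is not a one-line fact and deserves a justification or a reference if this proof were to be written out in full. Overall: your version is somewhat longer than the paper's two-paragraph contradiction argument, but it makes the role of the quotient-map restriction explicit and thereby avoids the implicit assumption in the paper's ``open in $K$'' step; what you lose in brevity you gain in transparency about exactly which hypotheses on $K$ make the restriction argument go through.
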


\begin{proof}
Suppose the lemma is false. Then $q^{-1}(K)= X_1 \sqcup X_2$, where $X_1$ and $X_2$ are nonempty and contained in disjoint open subsets of $X$.
Since $F$ is equipped with the quotient topology of $q$, $q(X_1)$ and $q(X_2)$ are both open subsets of $q(X_1)\cup q(X_2)=K$.
Because $K$ is connected, we must have $q(X_1)\cap q(X_2)\neq \emptyset$.

Let $x\in q(X_1)\cap q(X_2)$, so $V_1\coloneqq q^{-1}(x)\cap X_1$ and $V_2\coloneqq q^{-1}(x)\cap X_2$ are both nonempty.
Since $X_1$ and $X_2$ are open and disjoint subsets of $X_1\cup X_2$, $V_1$ and $V_2$ are open and disjoint subsets of $V_1\cup V_2=q^{-1}(x)$.
But by definition of induced Reeb graph, $q^{-1}(x)$ is connected, so we have a contradiction.
\end{proof}

\begin{theorem}
\label{Thm:FC<=U}
Given any two Reeb graphs $(F,f)$ and $(G,g)$, we have
\[\dFC(F,G) \leq \dU(F,G).\]
\end{theorem}
\begin{proof}
Let $X$ be a topological space with functions $\hat f,\hat g\colon X\to \R$ that induce Reeb graphs $(F,f)$ and $(G,g)$, respectively, and let $q_F \colon X \to F$, $q_G \colon X \to G$ denote the corresponding Reeb quotient maps. Suppose $\|\hat f-\hat g\|_\infty = \delta \geq 0$.
For any $\epsilon>0$, we will construct functions $\phi\colon F\to G$ and $\psi\colon G\to F$ that form a $(\delta+2\epsilon)$-contortion pair; the theorem follows.

Fix $\epsilon>0$. Pick a discrete subset $S\subseteq F$ containing all the $0$-cells of $F$ such that for each $1$-cell~$C$ and each connected component $K$ of $C\setminus S$, the image $f(K)$ is contained in some interval $[a,b]$ of length $b-a = \epsilon$.
Pick a subset $T\subseteq G$ analogously.
Define a map $\phi\colon S\to G$ by picking an element $\phi(s)\in q_G(q_F^{-1}(s))$ for each $s\in S$.

Let $L$ be the closure of a connected component of $F\setminus S$. Observe that $L$ is contained in a single $1$-cell $C$ and is homeomorphic to a closed interval, with endpoints $z,z'\in S$.
By our assumptions on $S$, $L$ is contained in a connected component of $f^{-1}[a,b]$ for some $a<b$ with $b-a = \epsilon$.
By \cref{Lem:connected_preimage}, $q_F^{-1}(L)$ is connected, and by continuity of $q_G$, $J\coloneqq q_G(q_F^{-1}(L))$ is connected, too.
Since $J$ is connected, we can extend $\phi$ continuously to $L$ by choosing a path from $\phi(z)$ to $\phi(z')$ in $J$.
Moreover, because $\|\hat f-\hat g\|_\infty = \delta$, we have $J\subseteq g^{-1}[a-\delta,b+\delta] \subseteq g^{-1}[f(x)\pm(\delta+\epsilon)]$.
It follows that for every $x\in L$, $\phi(x)$ and $q_G(q_F^{-1}(x))$ are connected in $g^{-1}[f(x)\pm(\delta+\epsilon)]$.
We do this for every $L$ as described and get a continuous map $\phi\colon F\to G$.
Analogously, we get a continuous map $\psi\colon G\to F$ such that for any $y\in G$, $\psi(y)$ and $q_F(q_G^{-1}(y))$ are connected in $f^{-1}[g(y)\pm(\delta+\epsilon)]$.

Pick an $x\in L$, where $L$ is as in the previous paragraph, and let $y=\phi(x)$.
By construction, $y\in q_G(q_F^{-1}(x'))$ for some $x'\in L$.
Thus, $x'\in q_F(q_G^{-1}(y))$, which, as noted, is in the same connected component of $f^{-1}[g(y)\pm(\delta+\epsilon)]$ as $\psi(y)$.
But $x$ and $x'$ are connected in $f^{-1}[a,b]$ for some $a<b$ with $b-a = \epsilon$, so it follows that $x$ and $\psi(y)$ are connected in $f^{-1}[g(y)\pm(\delta+2\epsilon)]$.
Along with the symmetric statement that follows by a similar argument, this is exactly what is needed for $(\phi,\psi)$ to be a $(\delta+2\epsilon)$-contortion pair.
\end{proof}

\begin{theorem}
\label{Thm:FD<=FC}
Given any two Reeb graphs $(F,f)$ and $(G,g)$, we have
\[\dFD(F,G) \leq \dFC(F,G).\]
\end{theorem}
This theorem follows immediately from the following lemma:
\begin{lemma}
\label{lem:cont_is_dist}
Suppose $\phi\colon F\to G$ and $\psi\colon G\to F$ form a $\delta$-contortion pair.
Then $\phi$ and $\psi$ also form a $\delta$-distortion pair.
\end{lemma}
\begin{proof}
Suppose $\phi\colon F\to G$ and $\psi\colon G\to F$ form a $\delta$-contortion pair for some $\delta\geq 0$. Then
\begin{equation}
\label{Eq:FD<=FC}
||f-g\circ\phi||_\infty, ||g-f\circ\psi||_\infty\leq \delta
\end{equation}
by \cref{Rem:cont_infty_bound}. Let $(x,y),(x',y')\in C(\phi,\psi)$, where $C(\phi,\psi)$ is as in \cref{Def:dFD}.
We claim that if $x$ and $x'$ are connected in $f^{-1}[a,b]$ for some $a\leq b$, then $y$ and $y'$ are connected in $g^{-1}[a-\delta,b+\delta]$.
Together with the symmetric statement and \cref{Eq:FD<=FC}, this is enough to show that $(\phi,\psi)$ is a $\delta$-distortion pair, from which the lemma follows.

Assume that $x$ and $x'$ lie in the same connected component $K$ of $f^{-1}[a,b]$.
We have that $\phi(K)$ is connected, and it follows from \cref{Eq:FD<=FC} that $\phi(K)\subseteq g^{-1}[a-\delta,b+\delta]$.
Since $\phi(x),\phi(x')\in \phi(K)$, $\phi(x)$ and $\phi(x')$ are connected in $g^{-1}[a-\delta,b+\delta]$.
By definition of $C(\phi,\psi)$, either $y$ is equal to $\phi(x)$, or $y\in \psi^{-1}(x)$.
In the latter case, $y$ and $\phi(x)$ are connected in $g^{-1}[f(x)\pm\delta]\subseteq g^{-1}[a-\delta,b+\delta]$ by definition of $\delta$-contortion pair.
Similarly, $y'$ and $\phi(x')$ are also connected in $g^{-1}[a-\delta,b+\delta]$.
Putting everything together, $y$ and $y'$ are connected in $g^{-1}[a-\delta,b+\delta]$, which completes the proof.
\end{proof}

\begin{theorem}[{\cite[Lemma 8]{bauer_strong_eq}}]
\label{Thm:I<=FD}
Given any two Reeb graphs $(F,f)$ and $(G,g)$, we have
\[\dI(F,G) \leq \dFD(F,G).\]
\end{theorem}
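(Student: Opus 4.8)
The plan is to convert a $\delta$-distortion pair between $(F,f)$ and $(G,g)$ into a $\delta$-interleaving; taking infima over admissible $\delta$ then gives $d_I(F,G)\le d_{FD}(F,G)$. So I would fix $\delta>d_{FD}(F,G)$ together with a $\delta$-distortion pair $(\phi,\psi)$ as in \cref{Def:dFD}. Since $\|f-g\circ\phi\|_\infty\le\delta$, the assignment $x\mapsto(\phi(x),\,f(x)-g(\phi(x)))$ is a well-defined continuous map $F\to\mathcal{T}_\delta G=G\times[-\delta,\delta]$, so composing it with the quotient map $q_{\mathcal{U}_\delta G}$ yields a continuous map $\hat\phi\colon F\to\mathcal{U}_\delta G$ satisfying $\mathcal{U}_\delta g\circ\hat\phi=f$ by construction; I would define $\hat\psi\colon G\to\mathcal{U}_\delta F$ symmetrically, so both triangles commute automatically. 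Writing $\hat\Phi:=\pr_G\circ\hat\phi$ and $\hat\Psi:=\pr_F\circ\hat\psi$ and using the bijection between points of $\mathcal{U}_\delta G$ and connected components of interlevel sets of $g$ recalled just after \cref{def:interleaving}, one checks that $\hat\Phi(x)$ is the connected component of $\phi(x)$ in $g^{-1}(I_\delta(f(x)))$, and symmetrically for $\hat\Psi$.

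The remaining work is the two connectedness conditions of \cref{def:interleaving}; I will carry out the first, the second being symmetric. Given $x\in F$ and a point $x''\in\hat\Psi(\hat\Phi(x))$, there is by definition some $y\in G$ connected to $\phi(x)$ by a path in $g^{-1}(I_\delta(f(x)))$, together with a path from $\psi(y)$ to $x''$ in $f^{-1}(I_\delta(g(y)))$. As $|g(y)-f(x)|\le\delta$, we have $I_\delta(g(y))\subseteq I_{2\delta}(f(x))$, so $x''$ and $\psi(y)$ are connected in $f^{-1}(I_{2\delta}(f(x)))$; pushing the path from $\phi(x)$ to $y$ forward along $\psi$ and using $\|f\circ\psi-g\|_\infty\le\delta$ gives a path from $\psi(\phi(x))$ to $\psi(y)$ that likewise lies in $f^{-1}(I_{2\delta}(f(x)))$. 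The one remaining link is to connect $x$ to $\psi(\phi(x))$ inside $f^{-1}(I_{2\delta}(f(x)))$: since $(x,\phi(x))$ and $(\psi(\phi(x)),\phi(x))$ both lie in $C(\phi,\psi)$, the distortion inequality gives $|d_f(x,\psi(\phi(x)))-d_g(\phi(x),\phi(x))|\le 2\delta$, and $d_g(\phi(x),\phi(x))=0$ forces $d_f(x,\psi(\phi(x)))\le 2\delta$; since $d_f(x,x')$ equals the minimum over paths from $x$ to $x'$ of the length of the smallest interval containing the path's $f$-values, a minimizing path from $x$ to $\psi(\phi(x))$ has $f$-image in an interval of length $\le 2\delta$ that contains $f(x)$, hence in $I_{2\delta}(f(x))$. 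Concatenating the three links shows $\{x\}\cup\hat\Psi(\hat\Phi(x))$ lies in a single component of $f^{-1}(I_{2\delta}(f(x)))$, as required.

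I expect this last link to be the main obstacle, and it is exactly where the sharp constant $1$ (rather than $2$) is obtained: the naive route $x\rightsquigarrow\psi(y)$ through a point $y\in\hat\Phi(x)$ only yields $d_f(x,\psi(y))\le d_g(\phi(x),y)+2\delta\le 4\delta$ and, worse, a path whose $f$-image is not anchored at $f(x)$, so it need not lie in $f^{-1}(I_{2\delta}(f(x)))$. Detouring through the auxiliary point $\psi(\phi(x))$ and exploiting that $\phi(x)$ has $d_g$-distance $0$ from itself is what eliminates the spurious factor; the rest is bookkeeping with inclusions of interlevel sets, and the second interleaving condition follows by exchanging the roles of $F$ and $G$ throughout. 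If one worries that the minimum defining $d_f$ might fail to be attained, it suffices to run the argument with $\delta$ replaced by $\delta+\varepsilon$ and let $\varepsilon\to 0$, since $d_I$ is itself an infimum.
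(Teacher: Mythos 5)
The paper does not give its own argument here: it cites \cite[Lemma 8]{bauer_strong_eq} and notes that the proof carries over despite the slightly different setting. Your proposal is a correct and essentially faithful reconstruction of that argument: you build the interleaving maps $\hat\phi,\hat\psi$ from a distortion pair via $x\mapsto q_{\mathcal{U}_\delta G}(\phi(x),\,f(x)-g(\phi(x)))$, observe commutativity is automatic, and then verify the triangle inequalities of the interleaving by pushing paths forward and, crucially, anchoring at $\psi(\phi(x))$ using the pair $(x,\phi(x)),(\psi(\phi(x)),\phi(x))\in C(\phi,\psi)$ so that $d_g(\phi(x),\phi(x))=0$ kills the spurious factor. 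This is the same mechanism as in the cited lemma; the $\varepsilon$-slack you add at the end to dodge non-attainment of the infimum in $d_f$ is a standard and harmless precaution.
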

The setting of \cite{bauer_strong_eq} is slightly more restrictive than ours (considering only finite Reeb graphs), but the proof of the result applies verbatim.

\subsection{Relating universal and functional distortion distance}
\label{subsec:U<=3FD}

We denote the connected component of a point $p$ in a space $X$ by $K_p(X)$.

\begin{theorem}
\label{Thm:U<=3FD}
Given any two Reeb graphs $(F,f)$ and $(G,g)$, we have
\[\dU(F,G) \leq 3 \dFD(F,G).\]
\end{theorem}

\begin{proof}
Assume that $\phi:F \to G$ and $\psi:G \to F$ form a $\delta$-distortion pair.
We construct a subspace $Z \subseteq F \times G$ such that the canonical projections $\pr_F \colon F \times G \to F$, $\pr_G \colon F \times G \to G$ restrict to Reeb quotient maps $q_F \colon Z \to F$, $q_G \colon Z \to G$ of $f \circ q_F$ and $g \circ q_G$, and $\|f \circ q_F - g \circ q_G\|_\infty \leq 3\delta$, proving that $\dU \leq 3\dFD$.

For $x\in F$, let
\[
C(x) = K_x(f^{-1}[a,a+2\delta]),
\]
where $a$ is chosen such that $C(x)$ contains $\psi\circ\phi(x)$. By definition of $\delta$-distortion pair, such an $a$ always exists, though it does not have to be unique. We define $C(y)$ analogously for $y\in G$:
\[
C(y) = K_y(g^{-1}[a',a'+2\delta])
\]
for some $a'$, and $C(y)$ contains $\phi\circ\psi(y)$. Now define
\[
Z = \bigcup_{x \in F} C(x) \times \phi(C(x)) \cup \bigcup_{y \in G} \psi(C(y)) \times C(y)
\subseteq F \times G
\]
and the functions $\hat f = f \circ \pr_F$, $\hat g = g \circ \pr_G \colon Z \to \R$.

To show that $\|\hat f - \hat g\|_\infty \leq 3\delta$, by symmetry it suffices to show that for every $x \in F$ and every $(z,y) \in C(x) \times \phi(C(x))$ we have $|f(z)-g(y)| \leq 3\delta$.
Pick $w\in C(x)$ such that $\phi(w)=y$. We have $|f(z)-f(w)|\leq 2\delta$ by construction of $C(x)$, and $|f(w)-g(y)|\leq \delta$ by definition of $\delta$-distortion pair.
Together, we have $|f(z)-g(y)| \leq 3\delta$ as claimed.

To show that $q_F\colon Z\to F$ is surjective, simply observe that for any $x\in F$,
\[
(x,\phi(x))\in C(x)\times \phi(C(x))\subseteq Z.
\]
A similar argument shows that also $q_G\colon Z\to G$ is surjective.

It remains to show that the fibers of $q_F$ are connected; by symmetry, the same is then true for $q_G$ as well.
The fiber of $z \in F$ is of the form $q_F^{-1}(z) = \{z\} \times G_z \subseteq Z$, where $G_z = q_G(q_F^{-1}(z)) \subseteq G$ is a subspace, homeomorphic to the fiber.
Note that $G_z$ has the explicit description
\[
G_z = \bigcup_{\substack{x \in F\\z \in C(x)}} \phi(C(x)) \cup \bigcup_{\substack{y \in G\\z \in \psi(C(y))}} C(y).
\]
Now $\phi(z)$ is contained in any $\phi(C(x))$ with $x \in F$ and $z \in C(x)$,
and in any $C(y)$ with $y \in \psi^{-1}(z)$, and each of these subspaces is connected. Thus,
\[
G'_z = \bigcup_{\substack{x \in F\\z \in C(x)}} \phi(C(x)) \cup \bigcup_{\substack{y \in \psi^{-1}(z)\\z \in \psi(C(y))}} C(y)
\]
is connected and contains $\psi^{-1}(z)$ as a subset.
Clearly, if $z \in \psi(C(y))$, then $C(y)$ contains an element of $\psi^{-1}(z)$, so $C(y)$ intersects $G'_z$. As $C(y)$ is connected, it follows that
\[
G'_z \cup \bigcup_{\substack{y \in G\\z \in \psi(C(y))}} C(y) = G_z
\]
is connected.
\end{proof}

\subsection{Relating universal and interleaving distance}
\label{subsec:U<=5I}

\begin{lemma}
\label{Lem:Phi_preserves_conn}
Let $(\phi,\psi)$ be a $\delta$-interleaving of $(F,f)$ and $(G,g)$ for some $\delta\geq 0$. If $K\subseteq F$ ($K'\subseteq G$) is connected, then $\Phi(K)$ ($\Psi(K')$) is connected.
\end{lemma}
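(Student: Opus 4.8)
The plan is to reduce the statement directly to \cref{Lem:connected_preimage} by unwinding the definition of the set-valued map $\Phi$. I only treat $\Phi$; the claim for $\Psi$ follows verbatim after swapping the roles of $(F,f)$ and $(G,g)$. Note that none of the interleaving conditions (the commuting triangles or the connectedness conditions) are used: the lemma holds for an arbitrary continuous map $\phi \colon F \to \mathcal{U}_\delta G$.

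\textbf{Key steps.} Write $p := q_{\mathcal{U}_\delta G} \colon \mathcal{T}_\delta G \to \mathcal{U}_\delta G$ for the Reeb quotient map of $\mathcal{T}_\delta g$. By the identification of points of $\mathcal{U}_\delta G = \mathcal{R}\mathcal{T}_\delta G$ with subsets of $\mathcal{T}_\delta G$, each $\phi(x)$ is the subset $p^{-1}(\phi(x)) \subseteq \mathcal{T}_\delta G = G \times [-\delta,\delta]$, so that $\Phi(x) = \pr_G\big(p^{-1}(\phi(x))\big)$. Taking the union over $x \in K$ and using that preimages commute with unions yields the key identity
\[
\Phi(K) = \pr_G\big(p^{-1}(\phi(K))\big).
\]
Now $\phi(K)$ is connected, being the continuous image of the connected set $K$ under $\phi \colon F \to \mathcal{U}_\delta G$. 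Since $\mathcal{U}_\delta G = \mathcal{R}\mathcal{T}_\delta G$ is the Reeb graph induced by $\mathcal{T}_\delta g \colon \mathcal{T}_\delta G \to \R$ with associated Reeb quotient map $p$, \cref{Lem:connected_preimage} applied to the connected subset $\phi(K)$ shows that $p^{-1}(\phi(K))$ is connected. Finally $\pr_G \colon \mathcal{T}_\delta G \to G$ is continuous, so $\Phi(K)$, as the continuous image of a connected set, is connected.

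\textbf{Main obstacle.} The proof is short; the only point needing care is the bookkeeping around the identification of points of $\mathcal{U}_\delta G$ with subsets of $\mathcal{T}_\delta G$ — one must be sure that the subset attached to $\phi(x)$ is exactly the whole fiber $p^{-1}(\phi(x))$, and that $\pr_G$ here is the projection $\mathcal{T}_\delta G \to G$, not a projection out of $F \times G$. Once these are pinned down, the identity $\Phi(K) = \pr_G\big(p^{-1}(\phi(K))\big)$ is immediate, and the rest is just connectivity of continuous images together with \cref{Lem:connected_preimage}.
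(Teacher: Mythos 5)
Your proof is correct and follows essentially the same route as the paper: unwind $\Phi(K)$ as $\pr_G\bigl(q_{\mathcal{U}_\delta G}^{-1}(\phi(K))\bigr)$, apply \cref{Lem:connected_preimage} to the connected set $\phi(K)$ to get a connected preimage, and take the continuous image under $\pr_G$. Your added remark that the interleaving conditions are not actually used (only continuity of $\phi$) is accurate, though the paper does not make this explicit.
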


\begin{proof}
By continuity of $\phi$, $\phi(K)\subseteq \mathcal{U}_\delta G$ is connected.
Thus, by \cref{Lem:connected_preimage},
\[
C\coloneqq q_{\mathcal{U}_\delta G}^{-1}(\phi(K))\subseteq \mathcal{T}_\delta G
\]
is connected.
We have that $\Phi(K)$ is exactly the image of $C$ under the projection $\pr_G\colon \mathcal{T}_\delta G\to G$.
Since this projection is continuous and $C$ is connected, $\Phi(K)$ is connected.

The statement for $K'$ and $\Psi$ follows by symmetry.
\end{proof}

\begin{theorem}
\label{Thm:U<=5I}
Given any two Reeb graphs $(F,f)$ and $(G,g)$, we have
\[\dU(F,G) \leq 5 \dI(F,G).\]
\end{theorem}

\begin{proof}
Let $(\phi,\psi)$ be a $\delta$-interleaving of $(F,f)$ and $(G,g)$, so to any $x\in F$, there is associated a subset $\Phi(x)\subseteq G$ that is a connected component of $g^{-1}[f(x)\pm\delta]$.
Similarly, for any $y\in G$, $\Psi(y)$ is a connected component of $f^{-1}[g(y)\pm\delta]$.
We construct a subspace $Z \subseteq F \times G$ and two functions $\hat f, \hat g \colon Z \to \R$ with $\|\hat f - \hat g\|_\infty \leq 5\delta$ such that the canonical projections $\pr_F \colon F \times G \to F$, $\pr_G \colon F \times G \to G$ restrict to Reeb quotient maps $q_F \colon Z \to F$ of $\hat f$ and $q_G \colon Z \to G$ of $\hat g$, proving that $\dU \leq 5\dI$.

For $x\in F$ and $y\in G$, let
\begin{align*}
C(x) &= K_x(f^{-1}[f(x)\pm2\delta]), &
C(y) &= K_y(g^{-1}[g(y)\pm2\delta]),
\end{align*}
and let
\[
Z = \bigcup_{x \in F} C(x) \times \Phi(C(x)) \cup \bigcup_{y \in G} \Psi(C(y)) \times C(y)
\subseteq F \times G.
\]
To show that $\|\hat f - \hat g\|_\infty \leq 5\delta$, by symmetry it suffices to show that for every $x \in F$ and every $(z,y) \in C(x) \times \Phi(C(x))$ we have $|f(z)-g(y)| \leq 5\delta$.
Pick $w\in C(x)$ such that $y\in \Phi(w)$. We have $|f(z)-f(w)|\leq 4\delta$ by construction of $C(x)$, and $|f(w)-g(y)|\leq \delta$ by definition of $\delta$-interleaving.
Together, we have $|f(z)-g(y)| \leq 5\delta$ as claimed.

To show that $q_F\colon Z\to F$ is surjective, simply observe that for any $x\in F$ and $y\in \Phi(x)$,
\[
(x,y)\in C(x)\times \Phi(C(x))\subseteq Z.
\]
A similar argument shows that also $q_G\colon Z\to G$ is surjective.

It remains to show that the fibers of $q_F$ are connected; by symmetry, the same is then true for $q_G$ as well.
The fiber of $z \in F$ is of the form $q_F^{-1}(z) = \{z\} \times G_z \subseteq Z$, where $G_z = q_G(q_F^{-1}(z)) \subseteq G$ is a subspace, homeomorphic to the fiber.
Note that $G_z$ has the explicit description
\[
G_z = \bigcup_{\substack{x \in F\\z \in C(x)}} \Phi(C(x)) \cup \bigcup_{\substack{y \in G\\z \in \Psi(C(y))}} C(y).
\]
Clearly, $\Phi(z)$ is contained in any $\Phi(C(x))$ with $x \in F$ and $z \in C(x)$.
In addition, $\Phi(z)\subseteq C(y)$ for any $y\in G$ such that $z\in \Psi(y)$, as $\Phi(\Psi(y))\subseteq C(y)$ by definition of interleaving.
By \cref{Lem:Phi_preserves_conn}, $\Phi(C(x))$ is connected. Thus,
\[
G'_z = \bigcup_{\substack{x \in F\\z \in C(x)}} \Phi(C(x)) \cup \bigcup_{\substack{y \in G\\z \in \Psi(y)}} C(y)\subseteq G_z
\]
is connected, since it is a union of connected sets that all contain $\Psi(z)$. To complete the proof that $G_z$ is connected, it suffices to show that $C(y)$ intersects $G'_z$ for all $y\in G$ such that $z \in \Psi(C(y))$.
To see this, observe that there is a $w\in C(y)$ such that $z\in \Psi(w)$, and $w\in C(w)\subseteq G'_z$.
\end{proof}

\subsection{Relating functional contortion and interleaving distance}
\label{subsec:FC<=3I}

\begin{theorem}
\label{Thm:FC<=3I}
Given any two Reeb graphs $(F,f)$ and $(G,g)$, we have
\[\dFC(F,G) \leq 3 \dI(F,G).\]
\end{theorem}

\begin{proof}
Let $(\phi,\psi)$ be a $\delta$-interleaving of $(F,f)$ and $(G,g)$ for some $\delta\geq 0$.
We will show that for an arbitrary $\epsilon>0$, there are $\mu\colon F\to G$ and $\nu\colon G\to F$ with functional contortion $3\delta+3\epsilon$. %

Pick a discrete subset $S\subset F$ containing all the $0$-cells of $F$ such that for each $1$-cell $C$ and connected component $I$ of $C\setminus S$, the interval $f(I)$ has length less than $\epsilon$.
Pick a discrete subset $T\subset G$ with the same properties.

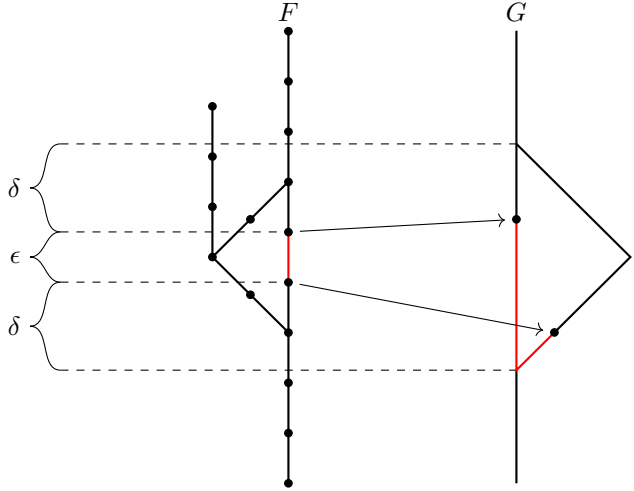
\begin{figure}
\begin{tikzpicture}[scale=1]
\draw (-2,0.333) to[out=180,in=0] (-2.4,0.917) to[out=0,in=180] (-2,1.5);
\node[left] at (-2.4,0.917){$\delta$};
\draw (-2,-0.333) to[out=180,in=0] (-2.4,-0.917) to[out=0,in=180] (-2,-1.5);
\node[left] at (-2.4,-0.917){$\delta$};
\draw (-2,-0.333) to[out=180,in=0] (-2.4,0) to[out=0,in=180] (-2,0.333);
\node[left] at (-2.4,0){$\epsilon$};
\draw[thick] (1,-3) to (1,-0.333);
\draw[thick,color=red] (1,-0.333) to (1,0.333);
\draw[thick] (1,0.333) to (1,3);
\draw[thick] (1,-1) to (0,0) to (1,1);
\draw[thick] (0,0) to (0,2);
\node[above] at (1,3){$F$};
\foreach \Y in {-3,-2.334,...,3}
\draw[color=black,fill=black] (1,\Y) circle (.05);
\draw[color=black,fill=black] (.5,-.5) circle (.05);
\draw[color=black,fill=black] (.5,.5) circle (.05);
\foreach \Y in {0,0.666,...,2}
\draw[color=black,fill=black] (0,\Y) circle (.05);
\draw[dashed] (-2,0.333) to (1,0.333);
\draw[dashed] (-2,-0.333) to (1,-0.333);
\draw[dashed] (-2,1.5) to (4,1.5);
\draw[dashed] (-2,-1.5) to (4,-1.5);
\coordinate (x) at (4,.5){};
\coordinate (y) at (4.5,-1){};
\draw[thick] (4,-3) to (4,-1.5);
\draw[thick] (x) to (4,3);
\draw[thick] (y) to (5.5,0) to (4,1.5);
\draw[thick,color=red] (y) to (4,-1.5) to (x);
\node[above] at (4,3){$G$};
\draw[->,shorten >=.15cm,shorten <=.15cm] (1,0.333) to (x);
\draw[color=black,fill=black] (x) circle (.05);
\draw[->,shorten >=.15cm,shorten <=.15cm] (1,-0.333) to (y);
\draw[color=black,fill=black] (y) circle (.05);
\end{tikzpicture}
\caption{Construction of a functional contortion pair for \cref{Thm:FC<=3I}. The points of $S$ are shown as black dots. The arrows and the red segments in $F$ and $G$ show $\mu$ applied to two points in $S$ and how we can extend $\mu$ to the segment between the points.}
\label{Fig:sampling_of_graph}
\end{figure}

For each $x\in S$, pick an arbitrary $y\in \Phi(x)$ and define $\mu(x) = y$. Similarly, for each $y\in T$, let $\nu(y)\in \Psi(y)$.
Let $I$ be a connected component of $F\setminus S$. Observe that $I$ is contained in a single $1$-cell $C$, and that its closure $\overline I$ contains %
two points $z,z'\in S$. By our assumptions on $I$, $\overline I$ is contained in a connected component of $f^{-1}[a,b]$ for some $a<b$ with $b-a=\epsilon$.
By \cref{Lem:Phi_preserves_conn}, it follows that $\Phi(\overline I)$ is contained in a connected component $K$ of $g^{-1}[a-\delta,b+\delta]$.
We can therefore 
use a a path from $\mu(z)$ to $\mu(z')$ in $K$ to extend $\mu$ continuously to $\overline I$; see \cref{Fig:sampling_of_graph}.
This implies that for all $x\in \overline I$, $\mu(x)$ and $\Phi(x)$ are both contained in $K$ and thus also in the same component of $g^{-1}[f(x)\pm(\delta+\epsilon)]$, as
\[
g^{-1}[a-\delta, b+\delta] \subseteq g^{-1}[f(x)\pm(\delta+\epsilon)].
\]
We do the same for all connected components of $F\setminus S$ and define $\nu$ similarly on $G\setminus T$.

Let $\delta'=\delta+\epsilon$. We now prove that $(\mu,\nu)$ is a $3\delta'$-contortion pair. By symmetry, it is enough to show that for any $x\in F$, $x$ and $\nu(\mu(x))$ are connected in $f^{-1}[g(\mu(x))\pm3\delta']$.
The $\delta$-interleaving $(\Phi,\Psi)$ induces a $\delta'$-interleaving $(\Phi',\Psi')$ canonically:
For $x\in F$, $\Phi'(x)$ is the connected component of $g^{-1}[f(x)\pm3\delta']$ containing $\Phi(x)$ as a subset, and $\Psi'(y)$ is defined similarly for $y\in G$.
We observed that $\mu(x)$ and $\Phi(x)$ are connected in $g^{-1}[f(x)\pm\delta']$, so $\mu(x)\in \Phi'(x)$.
Similarly, $\nu(\mu(x))\in \Psi'(\mu(x))$. Putting the two together, we get $\nu(\mu(x))\in \Psi'(\Phi'(x))$.
By definition of interleaving, we have $\Psi'(\Phi'(x)) \subseteq K_x(f^{-1}[f(x)\pm2\delta'])$. Since $|f(x)-g(\mu(x))|\leq \delta'$, we have
\[
f^{-1}[f(x)\pm2\delta'] \subseteq f^{-1}[g(\mu(x))\pm3\delta'],
\]
so
\[
\nu(\mu(x))\in \Psi'(\Phi'(x)) \subseteq K_x(f^{-1}[g(\mu(x))\pm3\delta']),
\]
which is what we wanted to prove.
\end{proof}

\section{Tightness of the bi-Lipschitz bounds for Reeb graphs}
\label{sec:lower}

In this section, we prove tightness of all the bounds stated in \cref{thm:main}.
For each of the inequalities, we  give examples of Reeb graphs for which the bounds are attained either tightly or approximately up to an arbitrarily small constant.
Specifically, if $d$ and $d'$ are distances on Reeb graphs such that $d\leq Cd'$, we either construct Reeb graphs $F,G$ such that $d(F,G) = C$ and $d'(F,G) = 1$, or we construct families $F_n,G_n$ such that $\lim_{n\to \infty} d(F_n,G_n) = C$ and $ \lim_{n\to \infty} d'(F_n,G_n) = 1$.

In \cref{sec:FCFD} and \cref{sec:FDI} we construct Reeb graphs with branches that extend to infinity.
This is just a technical convenience; the distance bounds still hold if we restrict the graphs to a large enough finite interval.
Thus, all the bounds in \cref{thm:main} are tight also in the case where we require the graphs to be finite CW complexes supported on a finite interval.

\subsection{Relating functional contortion and functional distortion distance}
\label{sec:FCFD}

We prove tightness of the bound $\dFC \leq 3\dFD$ from \cref{thm:main}
by giving an example of Reeb graphs $(F,f)$ and $(G,g)$ such that $\dFC(F,G) \geq 3$ and $\dFD(F,G) \leq 1$.

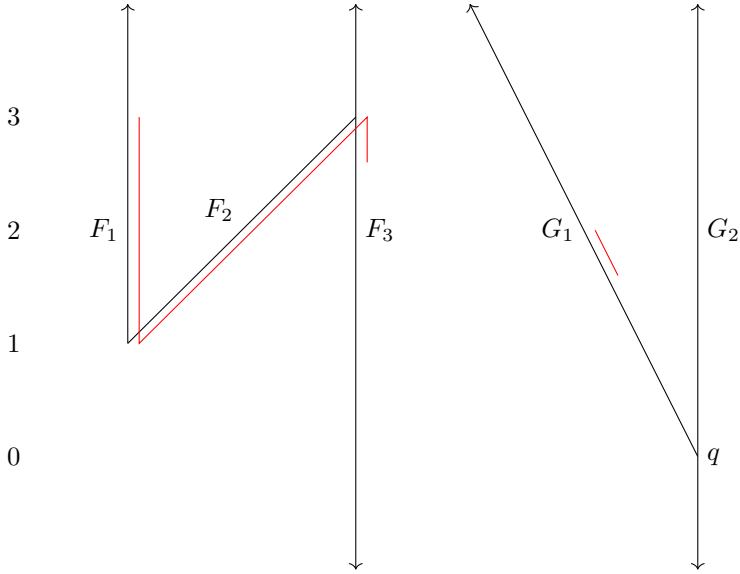
\begin{figure}
\begin{tikzpicture}[scale=1.5]
\begin{scope}
\draw[->] (0,1) to (0,4);
\draw[<->] (2,4) to (2,-1);
\draw (0,1) to (2,3);
\draw[color=red] (2.1,2.6) to (2.1,3) to (.1,1) to (.1,3);
\node[left] at (0,2){$F_1$};
\node[left,above] at (.8,2){$F_2$};
\node[right] at (2,2){$F_3$};
\node at (-1,0){$0$};
\node at (-1,1){$1$};
\node at (-1,2){$2$};
\node at (-1,3){$3$};
\end{scope}
\begin{scope}[xshift=3cm]
\draw[->] (2,0) to (0,4);
\draw[<->] (2,-1) to (2,4);
\draw[color=red] (1.1,2) to (1.3,1.6);
\node[right] at (2,0){$q$};
\node[left] at (1,2){$G_1$};
\node[right] at (2,2){$G_2$};
\end{scope}
\end{tikzpicture}
\caption{The Reeb graphs $F$ (left) and $G$ (right) of \cref{sec:FCFD}.
The red parts of $G$ and $F$ show $\{G_1\} \times [2-\delta,2]$ and a possible choice for $\psi(\{G_1\}\times [2-\delta,2])$, respectively, where $\psi$ is one of the $(1+\delta)$-distortion maps.}
\label{fig_FC_FD}
\end{figure}
We construct Reeb graphs by specifying a finite set of line or line segments of the form $\{l\}\times I$, where $I$ is an interval and $l$ works as a distinct label, and gluing them together with a finite set of pairwise identifications. 
We use the label $l$ to refer to the corresponding piece of the Reeb graph.
For instance, the graph $F$ to the left in \cref{fig_FC_FD} can be defined formally as
\[
(\{F_1\}\times [1,\infty) \cup \{F_2\}\times [1,3] \cup \{F_3\}\times (-\infty,\infty))/\sim,
\]
where $\sim$ is the equivalence relation generated by $(F_1,1)\sim(F_2,1)$ and $(F_2,3)\sim(F_3,3)$.
We occasionally abuse notation and do not distinguish between a point of an interval and its equivalence class in the quotient.
We will also refer to subgraphs by the first coordinate of their points, for instance by writing $F_1$ instead of $\{F_1\}\times [1,\infty)$. 
The resulting quotient space is a $1$-dimensional CW complex (the infinite edges can be subdivided into an infinite sequence of $1$-cells), and as long as we only glue together points with equal second coordinates, we get a well-defined Reeb graph map $F \to \R$ by projection onto the second coordinate. Similarly, we define
\[
G = (\{G_1\}\times [0,\infty) \cup \{G_2\}\times (-\infty,\infty))/\sim,
\]
with $(G_1,0)\sim(G_2,0)$. Note that both $F$ and $G$ are contour trees.

We claim that $\dFD(F,G)\leq1$ and $\dFC(F,G)\geq3$. To show the former, we construct a $(1+\delta)$-distortion pair $(\phi,\psi)$ for an arbitrarily small $\delta>0$.
Let $\phi$ be defined by
\begin{align*}
\phi(F_1,t) &= (G_1,t-1)\\
\phi(F_2,t) &= (G_2,t-1)\\
\phi(F_3,t) &= (G_2,t-1)
\end{align*}
for all $t\in\R$ whenever defined, and $\psi$ by
\begin{align*}
\psi(G_2,t) &= (F_3,t+1), t \in \R\\
\psi(G_1,t) &= (F_1,t+1), t \geq 2\\
\psi(G_1,t) &= (F_3,t+1), 0\leq t \leq 2-\delta,
\end{align*}
and we extend $\psi$ on $\{G_1\}\times[2-\delta,2]$ in such a way that $\psi$ is continuous, $\psi(\{G_1\}\times[2-\delta,2]) \subseteq f^{-1}([1,3])$, and $\psi$ agrees with the above
on $\{G_1\}\times \{2-\delta,2\}$.

First of all, it is straightforward to check that $||f-g\circ\phi||_\infty,||g-f\circ\psi||_\infty\leq 1+\delta$.
For $x\in F$, $y\in G$, consider the points in correspondence with $x$ and $y$, respectively:
\begin{align*}
C(x) &= \{z\in G \mid (x,z)\in C(\phi,\psi)\},\\
C(y) &= \{z\in F \mid (z,y)\in C(\phi,\psi)\}.
\end{align*}
Furthermore, let $T(x) =B(C(x))$ be the connecting subtree of $C(x)$ (see \cref{def:B(X)}), and define $T(y)$ analogously. 
For $x\in f^{-1}(\R \setminus [1,3])$, $T(x)$ is a single point $y$ with $g(y) = f(x) - 1$, and similarly, for $y\in g^{-1}(\R \setminus [0,2])$, $T(y)$ is a single point $x$ with $f(x) = g(y) + 1$.
Moreover, for $x\in f^{-1}[1,3]$ we have $C(x) \subseteq g^{-1}[0,2]$ and therefore $T(x) \subseteq g^{-1}[0,2]$ since $g^{-1}[0,2]$ is connected, and for $y\in g^{-1}[0,2]$ we have $T(y) \subseteq f^{-1}[1,3]$ for similar reasons.

Now let $(x,y), (x',y') \in C(\phi,\psi)$ be corresponding points.
We aim to show that given an interval $I=[a,b]$ such that $x$ and $x'$ are connected in $f^{-1}(I)$, there is an interval $J$ such that $y$ and $y'$ are connected in $g^{-1}(J)$ and the length of $J$ exceeds the length of $I$ by at most $2$.
We claim that $J$ can be chosen as an interval in the union $[a-1,b-1] \cup [0,2]$.
Since any such interval has length at most $b-a+2$, the claim follows.
Let $K$ be the connected component of both $x$ and $x'$ in $f^{-1}(I)$.
Note that $f(K) \subseteq [a,b]$.
Now consider the subset $L=\bigcup_{x'' \in K}T(x'') \subseteq G$.
This set is connected since it contains the connected subset $\phi(K)$, which intersects every $T(x'')$ for $x'' \in K$, and $T(x'')$ is also connected.
Furthermore, $L$ contains both $T(x)$ and $T(x')$, which in turn contain the points $y, y'$.
Finally, recall that we have either $g(T(x'')) = \{f(x'')-1\}$ or $g(T(x'')) \subseteq [0,2]$.
We conclude that
\[g(L) = \bigcup_{x'' \in K}g(T(x'')) \subseteq \bigcup_{x'' \in K}(f(x'')-1) \cup [0,2] \subseteq [a-1,b-1] \cup [0,2].\]

An analogous argument also shows that given an interval $J=[a,b]$ such that $y$ and $y'$ are connected in $g^{-1}(J)$, there is an interval $I \subseteq [a+1,b+1] \cup [1,3]$ such that $x$ and $x'$ are connected in $f^{-1}(I)$, so that the length of $I$ again exceeds the length of $J$ by at most $2$.

Now let $\epsilon<3$. Assume for a contradiction that $(\phi,\psi)$ is an $\epsilon$-contortion pair.
By the definition of $\epsilon$-contortion, $\psi(\phi(F_1,7))$ and $(F_1,7)$ are connected in
\[
f^{-1}[g(\phi(F_1,7))-\epsilon, g(\phi(F_1,7))+\epsilon]\subset f^{-1}[7 -2\epsilon, 7 +2\epsilon],
\]
which means that $\psi(\phi(F_1,7))$ lies in $F_1$.
Similarly, $\psi(\phi(F_3,7))$ lies in $F_3\cup F_2$.
Now, if $\phi(F_1,7)$ and $\phi(F_3,7)$ are connected in $g^{-1}[7-\epsilon,7+\epsilon]$, then by continuity of $\psi$, $\psi(\phi(F_1,7))$ and $\psi(\phi(F_3,7))$ are connected in $g^{-1}[7-2\epsilon,7+2\epsilon]$, a contradiction.
Thus, $\phi(F_1,7)$ and $\phi(F_3,7)$ lie in different branches of $G$; that is, $\phi(F_1,7)\in G_i$ and $\phi(F_3,7)\in G_j$, where either $i=1$ and $j=3$, or $i=3$ and $j=1$.
Let $B=B((F_1,7),(F_3,7))$, and let $B' = B(\phi(F_1,7),\phi(F_3,7))$.
Then $B'$ contains the point $q\coloneqq (G_1,0)$.
By continuity of $\phi$, $B'\subseteq \phi(B)$, so there must be a point $p \in B$ with $\phi(p)=q$.
Since $g(q) = 0$, we must have $f(p) \leq \epsilon < 3$, and thus $p \notin F_3$.
On the other hand, we must have $\psi(q) \subseteq \{F_3\} \times (-3,3)$, as the segment $\{G_2\} \times [-2,0]$ ending at $q$ must be mapped into a connected component of $f^{-1}[-2-\epsilon,\epsilon]$, and the point $(G_2,-2)$ can only be mapped to $g^{-1}[-2-\epsilon,-2+\epsilon]$, which is a subset of the connected component $\{F_3\} \times [-2-\epsilon,\epsilon]$.
Now $p \in \phi^{-1}(q)$ and $\psi(q)$ lie in different connected components of $f^{-1}[-\epsilon, \epsilon]$, and again we have a contradiction.
We conclude that there is no $\epsilon$-contortion pair between $F$ and $G$. 

\subsection{Relating functional distortion and interleaving distance}
\label{sec:FDI}

Next, we prove tightness of the bound $\dFD \leq 3\dI$ from \cref{thm:main}
by giving an example of Reeb graphs $F$ and $G$ such that $\dFD(F,G) \geq 3$ and $\dI(F,G)\leq 1$.

\begin{figure}
\begin{tikzpicture}[scale=.65]
\begin{scope}[xshift=-.5cm]
\node at (-1,-3){$-3$};
\node at (-1,-2){$-2$};
\node at (-1,-1){$-1$};
\node at (-1,0){$0$};
\node at (-1,1){$1$};
\node at (-1,2){$2$};
\node at (-1,3){$3$};
\end{scope}
\begin{scope}
\node at (1,4.5){$F$};
\draw[<->] (2,-4) to (2,4);
\draw[->] (0,1) to (0,4);
\draw (0,1) to (2,3);
\node[left] at (0,2.5){$F_1$};
\node[left,above] at (.8,2){$F_2$};
\node[right] at (2,0){$F_3$};
\draw[->] (0,-1) to (0,-4);
\draw (0,-1) to (2,-3);
\node[left] at (0,-2.5){$F_4$};
\node[left,below] at (.8,-2){$F_5$};
\end{scope}
\begin{scope}[xshift=3cm]
\node at (1,4.5){$G$};
\draw[<->] (0,-4) to (2,0) to (0,4);
\draw[<->] (2,-4) to (2,4);
\node[right] at (2,0){$q$};
\node[left] at (1,2){$G_1$};
\node[right] at (2,-1.5){$G_2$};
\node[left] at (1,-2){$G_3$};
\end{scope}
\begin{scope}[xshift=7cm]
\node at (1,4.5){$\mathcal{U}_1 F$};
\draw[<->] (2,-4) to (2,4);
\draw[->] (2,2) to (0,4);
\draw (0,0) to (2,2);
\draw[->] (2,-2) to (0,-4);
\draw (.5,0) to (2,-2);
\end{scope}
\begin{scope}[xshift=10cm]
\node at (1,4.5){$\mathcal{U}_1 G$};
\draw[<-] (0,-4) to (2,-1);
\draw[->] (2,1) to (0,4);
\draw[<->] (2,-4) to (2,4);
\end{scope}
\begin{scope}[xshift=14cm]
\node at (1,4.5){$\mathcal{U}_2 F$};
\draw[<->] (2,-4) to (2,4);
\draw[->] (2,3) to (1,4);
\draw (0,-1) to (2,1);
\draw[->] (2,-3) to (1,-4);
\draw (2,-1) to (1.1,-.1);
\draw (.9,.1) to (0,1);
\end{scope}
\begin{scope}[xshift=17cm]
\node at (1,4.5){$\mathcal{U}_2 G$};
\draw[<-] (0,-4) to (2,-2);
\draw[->] (2,2) to (0,4);
\draw[<->] (2,-4) to (2,4);
\end{scope}
\end{tikzpicture}
\caption{The Reeb graphs $F$ and $G$ from \cref{sec:FDI} and their $1$- and $2$-smoothings.}
\label{fig_Fd_I}
\end{figure}
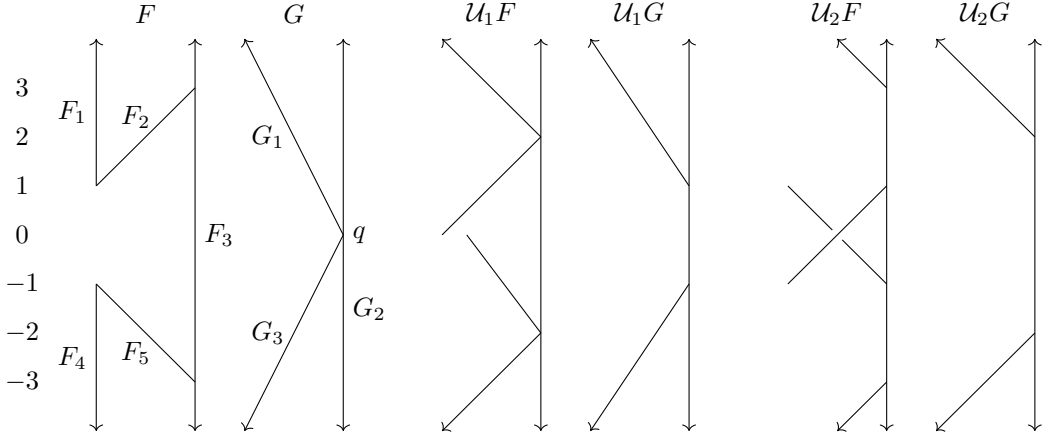
Informally, we let $(F,f)$ and $(G,g)$ be symmetric around level $0$, and let the upper parts be identical to those in the previous section; see \cref{fig_Fd_I}.
Formally, we define $(F,f)$ as
\[
\{F_1\}\times [1,\infty) \cup \{F_2\}\times [1,3] \cup \{F_3\}\times (-\infty,\infty) \cup \{F_4\}\times (-\infty,-1] \cup \{F_5\}\times [-3,-1],
\]
with the relations
\[
(F_1,1)\sim(F_2,1), (F_2,3)\sim(F_3,3), (F_4,-1)\sim(F_5,-1), (F_3,-3)\sim(F_5,-3);
\]
and $(G,g)$ as
\[
\{G_1\}\times [0,\infty) \cup \{G_2\}\times (-\infty,\infty) \cup \{G_3\}\times (-\infty,0],
\]
with the relations $(G_1,0)\sim (G_2,0)\sim (G_3,0)$.

Suppose we have an $\epsilon$-distortion pair $(\phi,\psi)$ between $F$ and $G$ for $\epsilon<3$.
Let $q=(G_2,0)$.
By arguments analogous to those in the previous subsection used to show non-existence of $\epsilon$-contortions for $\epsilon<3$, $\phi^{-1}(q)$ contains a point $p$ in $B((F_1,7),(F_3,7))$, and similarly, $\phi^{-1}(q)$ also contains a point $p'$ in $B((F_4,-7),(F_3,-7))$.
Any path between $p$ and $p'$ contains $\{F_3\}\times [-3,-3]$.
This contradicts the assumption that $(\phi,\psi)$ is an $\epsilon$-distortion pair for $\epsilon<3$.

Next we show that $\dI(F,G) \leq 1$ by describing a $1$-interleaving between $F$ and $G$.
By the remarks after \cref{def:interleaving}, it suffices to construct level-preserving functions $\mu\colon F\to \mathcal{U}_1 G$ and $\nu\colon G\to \mathcal{U}_1 F$ such that $\nu_1\circ\mu = \zeta_F^2$ and $\mu_1\circ\nu = \zeta_G^2$.
The smoothed Reeb graphs $\mathcal{U}_1 F$ and $\mathcal{U}_1 G$ are illustrated in \cref{fig_Fd_I}.
We define $\mu$ and $\nu$ by letting
\begin{align*}
\mu(F_1,4) &= \zeta_G^1(G_1,4), &
\mu(F_3,4) &= \zeta_G^1(G_2,4),\\
\mu(F_4,-4) &= \zeta_G^1(G_3,-4), &
\mu(F_3,-4) &= \zeta_G^1(G_2,-4),\\
\nu(G_1,4) &= \zeta_F^1(F_1,4), &
\nu(G_2,4) &= \zeta_F^1(F_3,4),\\
\nu(G_3,-4) &= \zeta_F^1(F_4,-4), &
\nu(G_2,-4) &= \zeta_F^1(F_3,-4).
\end{align*}
We leave to the reader to verify that this determines $\mu$ and $\nu$ uniquely (under the assumptions that the functions are level-preserving and continuous).
It follows that $\nu_1\circ\mu(p) = \zeta_F^2(p)$ for $p\in \{(F_1,4), (F_3,4), (F_4,-4), (F_3,-4)\}$; for instance
\[
\nu_1(\mu(F_1,4)) = \nu_1(\zeta_G^1(G_1,4)) = \zeta_{F_1}^1(\nu(G_1,4)) = \zeta_{F_1}^1(\zeta_F^1(F_1,4)) = \zeta_F^2(F_1,4),
\]
where the second equality follows from the definition of the induced map $\nu_1$ given after \cref{def:interleaving}.
These four values determine $\nu\circ\mu$ uniquely, so we must have $\nu_1\circ\mu = \zeta_F^2$.
A similar argument shows that $\mu_1\circ\nu = \zeta_G^2$, and thus $(\mu,\nu)$ is a $1$-interleaving.

\subsection{Relating universal and functional contortion distance}

We now prove tightness of the bound $\dU \leq 3\dFC$ from \cref{thm:main}
by giving an example family of Reeb graphs $F_n$ and $G_n$ such that $\dU(F_n,G_n) \geq 3$ and $\dFC(F_n,G_n)\leq 1+\frac{1}{2n}$.
Many of the ideas appearing in this subsection will be recycled in the next, where we prove tightness of the bound $\dU\leq 5\dI$.
In both subsections, we will use graphs of the form $F_{n,h}$ and $G_{n,h}$, where $n$ is a positive integer, and use the fact that $\dU(F_{n,h},G_{n,h}) \geq h$ (for $h=3$ and $h=5$, respectively), which we prove in this subsection.
In \cref{fig_fins1}, we have shown $F_{8,3}$ on the left and $G_{8,3}$ on the right.

Recall from \cref{sec:FCFD} how we construct Reeb graphs by gluing together labeled intervals.
$F_{n,h}$ contains the following pieces, where $r_i\coloneqq \frac{n-i}{n}(h+1)$, $s_i \coloneqq r_i+h-1$ and $t_i\coloneqq (h-1)(n-i) +4h$ (suppressing the dependence on $n$ and $h$ from the notation):
\begin{align*}
&\{L, R \}\times [0,h+1],\\
&\{L_i,R_i\}\times [r_i,s_i] \text{ and }
\{U_i\}\times [s_i,t_i] \text{ for } i=0,1, \dots, n
\end{align*}
that are glued together by the following relations
\begin{align*}
(L,0) &\sim (R,0)\\
(L,r_i) &\sim (L_i,r_i)\\
(R,r_i) &\sim (R_i,r_i)\\
(L_i,s_i)&\sim (R_i,s_i)\sim (U_i,s_i)
\end{align*}
whenever they make sense.

$G_{n,h}$ contains the pieces
\begin{align*}
&\{M\}\times [-1,h],\\
&\{O_i\}\times [r_i-1,t_i-1] \text{ for } i=0, \dots, n
\end{align*}
with relations $(M,r_i-1)\sim (O_i,r_i-1)$.

\begin{figure}
\begin{tikzpicture}[scale=.7]
\begin{scope}[xscale=2]
\foreach \i in {-1,...,6}{
\node at (-1,\i){$\i$};
}
\begin{scope}[xshift=1cm]
\draw (0,4) to (0,0) to (2,0) to (2,4);
\foreach \i in {0,.5,...,3.5}{
\draw (0,\i) to (1,\i+2) to (2,\i);
\draw[->] (1,\i+2) to (1,\i+2.3);
}
\draw (0,4) to (1,6) to (2,4);
\draw[->] (1,6) to (1,7);
\node[right] at (1,6){$u_0$};
\node[right] at (1.2,5.5){$u_1$};
\node[left] at (1,6.5){$U_0$};
\node[left] at (0,2){$L$};
\node at (.3,5.3){$L_0$};
\node at (1.3,.9){$R_8$};
\node[right] at (2,2){$R$};
\end{scope}
\begin{scope}[xshift=5cm]
\draw (0,-1) to (0,3);
\foreach \i in {-1,-.5,...,3}{
\draw[->] (0,\i) to (1,\i+2);
}
\node[right] at (1,5){$O_0$};
\node[right] at (1,1){$O_8$};
\node[left] at (0,1){$M$};
\node[left] at (0,3){$m$};
\end{scope}
\end{scope}
\draw [fill=black] (4,6) circle (.07);
\draw [fill=black] (4,5.5) circle (.07);
\draw [fill=black] (10,3) circle (.07);
\end{tikzpicture}
\caption{$F_{8,3}$ to the left, $G_{8,3}$ to the right.
The horizontal segment in $F_{8,3}$ represents a single point.
The arrows illustrate branches that go further up than what we have drawn.}
\label{fig_fins1}
\end{figure}
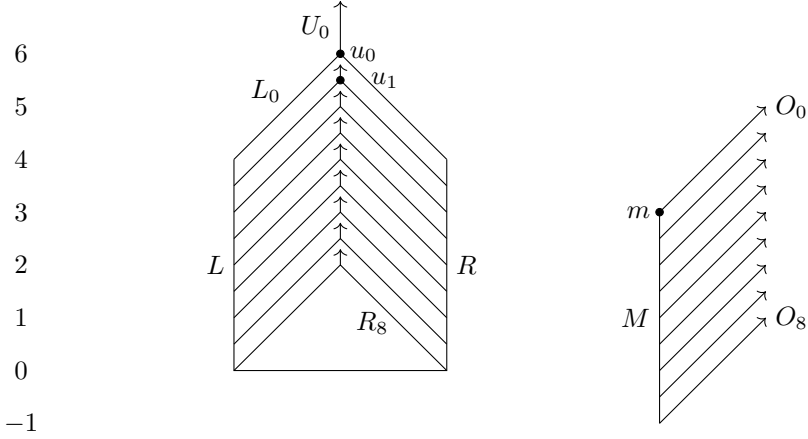

We first prove $\dU(F_{n,h},G_{n,h}) \geq h$, then $\dFC(F_{n,3},G_{n,3})\leq 1+\frac{1}{2n}$.
The first of these inequalities will be used in both this subsection and the next; here we need the inequality with $h=3$, and in the next subsection we need it with $h=5$.

Let $G=G_{n,h}$ and $F=F_{n,h}$.
Suppose $\dU(F,G)<h$.
Then there is an $\epsilon<h$, a space $X$ and maps $f:X\to \R$ and $g:X\to \R$ such that $F$ and $G$ are the induced Reeb graphs of $f$ and $g$ (up to isomorphism), respectively, and $\|f-g\|_\infty\leq \epsilon$.
We denote the induced maps $X\to F$ and $X\to G$ by $\pi_F$ and $\pi_G$. Let $\alpha= \pi_G\circ \pi_F^{-1}:F \to \Pp(G)$, where $\Pp(G)$ denotes the power set of $G$.
Note that for $p\in F$, $\pi_F^{-1}(p)$ is nonempty and connected by the definition of induced Reeb graph, and $\pi_G$ preserves connectedness since it is continuous.
Thus, $\alpha(p)$ is a nonempty connected subset of $G$.
Also observe that since $\|f-g\|_\infty\leq \epsilon<h$, we have $\alpha(p)\subseteq g^{-1}(f(p)-h,f(p)+h)$.
Define $\beta\colon G \to \Pp(F)$ symmetrically.
By construction, $q\in \alpha(p)$ if and only if $p\in \beta(q)$.
More concisely, $\alpha = \beta^{-1}$ and $\beta = \alpha^{-1}$.

We first show that $\alpha(U_i)\subseteq O_i$ by induction on $i$.
For $i=0$, consider the point $u_0=(U_0,t_0)$.
We have $\alpha(u_0)\subseteq g^{-1}(t_0-h, t_0+h)$, which is contained in $O_0$.
We have $\alpha(U_0)\subseteq g^{-1}[s_0-\epsilon, t_0+\epsilon]$ and $s_0-\epsilon> r_0-1$, so since $\alpha$ preserves connectivity, it follows that $\alpha(U_0) \subset O_0$.
For any $q\in \alpha(U_0)$, $\beta(q)$, which as observed is equal to $\alpha^{-1}(q)$, thus intersects $U_0$.
Picking $q\in g^{-1}(t_0-h,t_0+h)$ and using that $\beta$ preserves connectivity, we get $\beta(q)\subseteq U_0$ and thus also $\beta(\{O_0\}\times [s_0+h,t_0-1])\subseteq U_0$.

Assume by induction that $\alpha(U_j)\subseteq O_j$ and $\beta(\{O_j\}\times [s_j+h,t_j-1])\subseteq U_j$ for $j<i$.
Then, letting $u_i= (U_i,t_i)$, $\alpha(u_i)$ cannot intersect $O_j$ for any $j<i$, since this would contradict $\beta(\{O_j\}\times [s_j+h,t_j-1])\subseteq U_j$.
Thus, by an argument similar to the case $i=0$, we get $\alpha(U_i)\subseteq O_i$ and $\beta(O_i,x)\subseteq U_i$ for $x\geq s_i+h$.
This concludes the proof by induction.

In particular, $\alpha(U_i)\subseteq O_i$ implies that $u_i \notin \beta(M)$ for all $i$.
Let $m=(M,h)\in G$.
We get that $\beta(m)$ is contained in $f^{-1}[h-\epsilon, h+\epsilon]\setminus \{u_0, \dots, u_n\}$, in which the ``left side'' of $F$ is not connected to the ``right side''.
More precisely, since $\beta(m)$ is connected, we can in particular conclude that $\beta(m)$ does not intersect both $L_{0,1}\coloneqq L\cup L_0 \cup L_1$, and $R_{0,1}\coloneqq R\cup R_0 \cup R_1$.
Suppose without loss of generality that $\beta(m) \cap L_{0,1} = \emptyset$, or equivalently, $m\notin \alpha(L_{0,1})$.
But $\alpha(L_{0,1})$ contains both $\alpha(u_0)$ and $\alpha(u_1)$, so $\alpha(L_{0,1})$ intersects both $O_0$ and $O_1$.
Since $\alpha(L_{0,1})$ is connected, it must contain $m$, which is a contradiction.
We conclude that $\dU(F,G)\geq h$.

Next, let $h=3$, so $F = F_{n,3}$ and $G = G_{n,3}$.
We construct $\phi\colon F\to G$ and $\psi\colon G\to F$ with contortion $1+\frac{2}{n}$.
\begin{itemize}
	\item For $x\in [-1,3]$, let $\psi(M,x)= (L,x+1)$,
	\item for all $x$ and $i$ such that $(O_i,x)$ is defined, let $\psi(O_i,x)$ be $(L_i,x+1)$ if defined, and otherwise $(U_i,x+1)$.
\end{itemize}
For any $p$ in the image of $\psi$, let $\phi(p)$ be the unique point in $\psi^{-1}(p)$.
Given the left-right symmetry of $F$, there is an obvious level-preserving homeomorphism $\rho\colon F\to F$ flipping the left and right sides of $F$.
For any $p\in F$ for which we have not defined $\phi$ yet, we define $\phi(p)=\phi(\rho(p))$.

By construction, $\phi\circ\psi$ is the identity on $G$, while for $p\in F$, $\phi(p)$ is either $p$ or $\rho(p)$.
Therefore, to finish the proof, we only need to check that for any $p\in F$, $p$ and $\rho(p)$ are connected in $K\coloneqq f^{-1}[g(\phi(p))-1-\frac{2}{n},g(\phi(p))+1+\frac{2}{n}]$.
A careful look at \cref{fig_fins1} will reveal the main idea: for any interval $I\subseteq [0,6]$ of length at least $2+\frac{4}{n}$, $f^{-1}(I)$ contains $L_i\cup R_i$ for some $i$, allowing a path between $L$ and $R$ inside of $f^{-1}(I)$.
The rest of the proof is just a technical verification that this idea does indeed give us what we want.

First observe that $p$ and thus $\rho(p)$ are themselves contained in $K$.
If $p=\rho(p)$, we are done, so assume $p\neq \rho(p)$.
We can assume $p\in R$ or $p\in R_i$ for some $i$.
Note that it is enough to find a path contained in $K$ from $p$ to $(R,0)$ or $p_i$ for some $i$, as then there is a symmetric path in $K$ from $p'$ to the same point.
There is a monotone path downwards from $p$ to a point $p_\downarrow$ such that $f(p_\downarrow)=\max\{g(\phi(p))-1-\frac{2}{n},-5\}$.
The image of this path is unique and contained in $K$.
If $f(p_\downarrow)=-5$, then $p_\downarrow= (R,-5)$, and we are done.
Therefore, we can assume $f(p_\downarrow)=g(\phi(p))-1-\frac{2}{n}$.
By construction, $g(\phi(p))= f(p)-1$, so $g(\phi(p))-1-\frac{2}{n}\leq f(p)-2$.
Thus, $p_\downarrow$ lies on $R$, so there is a monotone path upwards from $p_\downarrow$ to one of the points $p_i$ with $f(p_i)\leq f(p_\downarrow)+2+\frac{4}{n}$, as the difference between the levels of the bottom points of $R_i$ and $R_{i+1}$ is $|r_i-r_{i+1}| = \frac{4}{n}$.
We have
\begin{align*}
f(p_i)\leq f(p_\downarrow)+2+\frac{4}{n} &= g(\phi(p))-1-\frac{2}{n} + 2+\frac{4}{n}\\
&= g(\phi(p))+1+\frac{2}{n},
\end{align*}
so this path is contained in $K$.
Thus, in all cases, $p$ and $\rho(p)$ are connected in $K$, and we can conclude that $(\phi, \psi)$ is a $\left(1+\frac{2}{n}\right)$-contortion pair.

\subsection{Relating universal and interleaving distance}
We now prove tightness of the bound $\dU \leq 5\dI$ from \cref{thm:main} by defining a family of Reeb graphs $F_n$ and $G_n$ such that $\dU(F_n,G_n) \geq 5$ and $\dI(F_n,G_n)\leq 1+\frac{3}{2n}$.
Specifically, we choose $F_n$ to be $F_{n,5}$ and $G_n$ to be $G_{n,5}$ as defined in the previous subsection.

We already showed that $\dU(F_{n,h},G_{n,h}) \geq h$, which gives us the first inequality by putting $h=5$.
To show that $\dI(F,G)\leq 1+\frac{3}{2n}$, we will apply the following lemma:

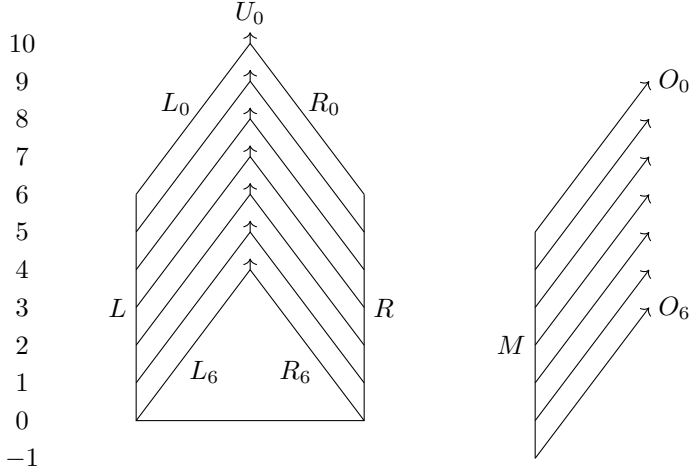
\begin{figure}
\begin{tikzpicture}[scale=.5]
\begin{scope}[xscale=1.5]
\foreach \i in {-1,...,10}{
\node at (-1,\i){$\i$};
}
\begin{scope}[xshift=1cm]
\draw (0,6) to (0,0) to (4,0) to (4,6);
\foreach \i in {0,...,6}{
\draw (0,\i) to (2,\i+4) to (4,\i);
\draw[->] (2,\i+4) to (2,\i+4.3);
}
\node[left] at (0,3){$L$};
\node at (.7,8.4){$L_0$};
\node at (1.2,1.3){$L_6$};
\node[right] at (4,3){$R$};
\node at (3.3,8.4){$R_0$};
\node at (2.8,1.3){$R_6$};
\node at (2,10.8){$U_0$};
\end{scope}
\begin{scope}[xshift=8cm]
\draw (0,-1) to (0,5);
\foreach \i in {-1,...,5}{
\draw[->] (0,\i) to (2,\i+4);
}
\node[left] at (0,2){$M$};
\node[right] at (2,9){$O_0$};
\node[right] at (2,3){$O_6$};
\end{scope}
\end{scope}
\end{tikzpicture}
\caption{$F_{6,5}$ to the left, $G_{6,5}$ to the right.
The arrows illustrate branches that go further up than what we have drawn.}
\label{fig_fins2}
\end{figure}

\begin{lemma}
\label{lem:induced_Int}
Let $(F,f)$ and $(G,g)$ be Reeb graphs and $\phi\colon F\to G$ and $\psi\colon G\to F$ be continuous functions with $\|f-g\circ\phi \|_\infty\leq \epsilon$ and $\|g-f\circ\psi \|_\infty\leq \epsilon$, and such that for all $p\in F$ and $q\in G$,
\begin{itemize}
\item $p$ and $\psi(\phi(p))$ are connected in $f^{-1}[f(p)\pm2\epsilon]$,
\item $q$ and $\phi(\psi(q))$ are connected in $g^{-1}[g(q)\pm2\epsilon]$.
\end{itemize}
Then there is an $\epsilon$-interleaving between $(F,f)$ and $(G,g)$.
\end{lemma}
\begin{proof}
Since $\|f-g\circ\phi \|_\infty\leq \epsilon$, $\phi$ induces a continuous level-preserving function $\bar{\phi}\colon F\to \Uu_\epsilon G$ by $\bar{\phi}(p) = q_{U_\epsilon G}(\phi(p),f(p)-g(\phi(p)))$.
Similarly, $\psi$ induces a continuous level-preserving function $\bar{\psi}\colon G\to \Uu_\epsilon F$.
As explained before \cref{def:interleaving}, $\bar \phi$ and $\bar \psi$ induce set-valued maps $\Phi\colon F\to \Pp(G)$ and $\Psi\colon G\to \Pp(F)$, and by construction, these satisfy $\phi(p)\in\Phi(p)$ and $\psi(q)\in\Psi(q)$ for all $p\in F$ and $q\in G$.

Since for all $p\in F$, $\Psi(\Phi(p))$ is connected and $p$ and $\psi(\phi(p))$ are connected in $f^{-1}[f(p)-2\epsilon, f(p)+2\epsilon]$ by the assumption in the lemma, and the same holds for $\Phi(\Psi(q))$ for all $q\in G$, we get that $\bar \phi$ and $\bar \psi$ form an $\epsilon$-interleaving.
\end{proof}

For simplicity, we write $F$ and $G$ for $F_n$ and $G_n$, respectively, from now on.
We construct $\phi\colon F\to G$ and $\psi\colon G\to F$ in a way entirely analogous to the morphisms with contortion $1+\frac{2}{n}$ in the previous subsection.
Specifically,
\begin{itemize}
\item For $x\in [-1,5]$, let $\psi(M,x)= (L,x+1)$,
\item for all $x$ and $i$ such that $(O_i,x)$ is defined, let $\psi(O_i,x)$ be $(L_i,x+1)$ if defined, and otherwise $(U_i,x+1)$.
\end{itemize}
For any $p$ in the image of $\psi$, let $\phi(p)$ be the unique point in $\psi^{-1}(p)$.
As in the previous subsection, there is a level-preserving homeomorphism $\rho\colon F\to F$ flipping the left and right sides of $F$.
For any $p\in F$ for which we have not defined $\phi$ yet, we define $\phi(p)=\phi(\rho(p))$.

We have that $\phi$ and $\psi$ satisfy $\|f-g\circ\phi \|_\infty\leq 1$ and $\|g-f\circ\psi \|_\infty\leq 1$.
$\phi\circ \psi$ is the identity on $G$, and for all $p\in F$, we have that $\psi(\phi(p))$ is either $p$ or $\rho(p)$.
Thus, to apply \cref{lem:induced_Int}, the only nontrivial condition that is left to check is that for all $p\in F$, $p$ and $\rho(p)$ are connected in $f^{-1}[f(p)-2-\frac{3}{n}, f(p)+2+\frac{3}{n}]$.
The argument is virtually the same as the proof of the $(1+\frac{2}{n})$-contortion property in the previous subsection; the details are left to the reader.
Thus, \cref{lem:induced_Int} tells us that there is a $\delta$-interleaving between $F$ and $G$, and we are done.

\section{Universality of the functional contortion distance for contour trees}
\label{sec:contour}

We now prove \cref{thm:contour}, which says that the universal and functional contortion distances are equal for contour trees.

\begin{proof}[Proof of \cref{thm:contour}]
We know $\dFC(F,G) \leq \dU(F,G)$ by \cref{Thm:FC<=U}, so it remains to prove $\dFC(F,G) \geq \dU(F,G)$.

Assume that there is a $\delta$-contortion pair $(\phi,\psi)$ between $F$ and $G$. %
We construct a subspace $Z \subseteq F \times G$ and two functions $\hat f, \hat g \colon Z \to \R$ with $\|\hat f - \hat g\|_\infty \leq \delta$ such that the canonical projections $\pr_F \colon F \times G \to F$, $\pr_G \colon F \times G \to G$ restrict to Reeb quotient maps $q_F \colon Z \to F$ of $\hat f$ and $q_G \colon Z \to G$ of $\hat g$, proving that $\dFC(F,G) \geq \dU(F,G)$.

Let $C(x)= \{\phi(x)\}\cup \psi^{-1}(x)$ for $x\in F$, and let $T(x) = B(C(x))$ be the connecting subtree of $C(x)$ (see \cref{def:B(X)}).
Let $T(y)$ be defined similarly for $y\in G$ switching $\phi$ and $\psi$.
Let $Z \subseteq F \times G$ be given by
\[Z = \left[\bigcup_{x\in F}\{x\}\times T(x)\right] \cup \left[\bigcup_{y\in G}T(y)\times \{y\}\right]. \]

To show that $\|\hat f - \hat g\|_\infty \leq \delta$, by symmetry it suffices to show that for every $x \in F$ and $y\in T(x)$,
$|f(x)-g(y')| \leq \delta$.
By definition of $\delta$-contortion pair, the connected component $K$ of $g^{-1}[f(x)-\delta,f(x)+\delta]$ containing $\phi(x)$ also contains $\psi^{-1}(x)$.
Thus, $T(x) \subseteq K$, and the statement follows.

For any $x\in F$, we have
\[
(x,\phi(x)) \in \{x\}\times T(x)\subseteq Z.
\]
It follows immediately that $q_F\colon Z\to F$ is surjective, and similarly for $q_G\colon Z\to G$.

It remains to show that the fibers of $q_F$ and $q_G$ are connected. By symmetry, we only need to prove this for $q_F$.
The fiber of $x \in F$ is of the form $q_F^{-1}(x) = \{x\} \times G_x \subseteq Z$, where $G_x = q_G(q_F^{-1}(x)) \subseteq G$ is a subspace, homeomorphic to the fiber.
To follow the arguments that follow, we suggest keeping an eye on \cref{fig:contour_proof}.
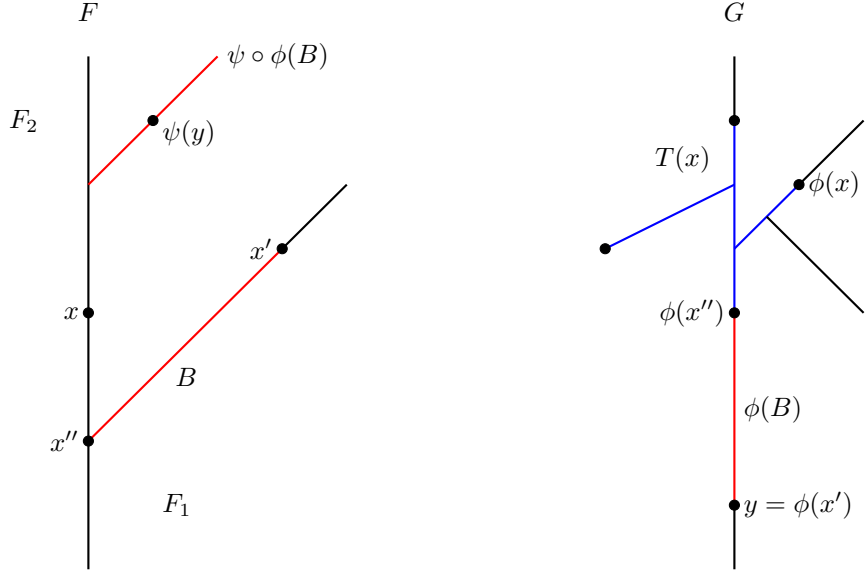
\begin{figure}
\centering
\begin{tikzpicture}[scale=1.7]
\begin{scope}
\draw[thick] (0,-2) to (0,2);
\draw[thick,color=red] (0,-1) to (1.5,.5);
\draw[thick] (1.5,.5) to (2,1);
\draw[thick,color=red] (0,1) to (1,2);
\node[above] at (0,2.2){$F$};
\node[left] at (0,0){$x$};
\node[left] at (1.5,.5){$x'$};
\node[left] at (0,-1){$x''$};
\node[right] at (.5,1.4){$\psi(y)$};
\node[right] at (1,2){$\psi\circ\phi(B)$};
\node[right] at (.6,-.5){$B$};
\node at (-.5,1.5){$F_2$};
\node[right] at (.5,-1.5){$F_1$};
\begin{scope}[every node/.style={draw,shape=circle,fill=black,scale=.4}]
\node at (0,0){};
\node at (1.5,.5){};
\node at (.5,1.5){};
\node at (0,-1){};
\end{scope}
\end{scope}
\begin{scope}[xshift=5cm]
\draw[thick,color=blue] (0,0) to (0,1.5);
\draw[thick] (0,1.5) to (0,2);
\draw[thick,color=blue] (0,.5) to (.5,1);
\draw[thick] (.5,1) to (1,1.5);
\draw[thick] (.25,.75) to (1,0);
\draw[thick,color=blue] (0,1) to (-1,.5);
\draw[thick] (0,-1.5) to (0,-2);
\draw[thick,color=red] (0,0) to (0,-1.5);
\node[above] at (0,2.2){$G$};
\node[left] at (0,0){$\phi(x'')$};
\node[right] at (0,-1.5){$y=\phi(x')$};
\node at (-.4,1.2){$T(x)$};
\node[right] at (0,-.75){$\phi(B)$};
\node[right] at (.5,1){$\phi(x)$};
\begin{scope}[every node/.style={draw,shape=circle,fill=black,scale=.4}]
\node at (0,0){};
\node at (.5,1){};
\node at (0,1.5){};
\node at (0,-1.5){};
\node at (-1,.5){};
\end{scope}
\end{scope}
\end{tikzpicture}
\caption{Illustration of constructions used to prove connectedness of fibers of $q_F$.}
\label{fig:contour_proof}
\end{figure}
Let $y\in G_x\setminus T(x)$. Then there is an $x'\in \phi^{-1}(y)$ such that $x\in B(\psi(y),x')$.
Equivalently, $x'$ and $\psi(y)$ are in different connected components $F_1$ and $F_2$, respectively, of $F\setminus \{x\}$. (Since $y\notin C(x)$, neither $x'$ nor $\psi(y)$ is equal to $x$.)
Since $C(x)$ is closed, so is $\phi^{-1}(C(x))$. This means that we can pick an $x''\in \phi^{-1}(C(x))$ such that
\[
B\coloneqq B(x',x'')\setminus\{x''\}
\]
does not intersect $\phi^{-1}(C(x))$. It follows that $B\subseteq F_1$, since $x\notin B$.
It also follows that $\psi\circ\phi(B)\subseteq F_2$, since $x\notin \psi\circ\phi(B)$ and $\psi(y) \in \psi\circ\phi(B)$. Thus, for all $z\in B$, we have $x\in B(\psi\circ\phi(z),z)$; i.e.,
\[
(x,\phi(z))\in B(\psi\circ\phi(z),z)\times \{\phi(z)\}\subseteq Z,
\]
so $\phi(B)\subseteq G_x$. This means that there is a path in $G_x$ from $y=\phi(x')$ to $\phi(x'')\in T(x)$. Since $y$ was an arbitrary point in $G_x$ and $T(x)$ is connected, it follows that $G_x$ is connected.
\end{proof}

\section{Universality of the interleaving distance for merge trees}
\label{sec:merge}

In this section, we focus on merge trees, which are a special case of contour trees that also arise from the connected components of the sublevel set filtration of a function.

The merge trees obtained this way carry a function that is unbounded above, and they are characterized by the property that the canonical map from the merge tree to the Reeb graph of its epigraph is an isomorphism~\cite{MBW13}.
Our definition is more general and also admits bounded functions, and in \cref{sec:epi_merge} we develop an analogous characterization for these general merge trees via the property that said canonical map is an embedding.

Our goal in \cref{sec:interleaving_cont_merge} is to prove that the interleaving distance for merge trees is universal.
By \cref{thm:contour}, it suffices to construct a $\delta$-contortion pair from a $\delta$-interleaving of merge trees.
Summarizing the idea for the simpler special case of a merge tree $G$ unbounded above, the key insight behind the proof is that the $\delta$-smoothing of $G$ is isomorphic to a $\delta$-shift of $G$.
Composing the interleaving morphisms with the isomorphisms obtained this way yields the desired $\delta$-contortion pair in the unbounded case.

\subsection{Merge trees as Reeb graphs of epigraphs}
\label{sec:epi_merge}

We formally characterize merge trees using a construction
based on \emph{epigraphs}, as previously suggested by Morozov et al.~\cite{MBW13}.

\begin{definition}[Epigraph]
  Let $f \colon X \rightarrow \R$ be a continuous function.
  We define the \emph{epigraph of $f$} as the space
  $\mathcal{E} X := X \times [0, \infty)$
  equipped with the function
  ${\mathcal{E} f \colon \mathcal{E} X \rightarrow \R,\,
  (p, t) \mapsto f(p) + t.}$
\end{definition}

While this is not the usual definition of the epigraph
$\{(p, y) \in X \times \R \mid f(p) \leq y\}$,
we note that the map $\mathcal{E} f \colon \mathcal{E} X \rightarrow \R$
and the projection of the ordinary epigraph
to the second component
are isomorphic as $\R$-spaces.
Our definition has the benefit that we have the strict equality
\(
  \delta + \mathcal{E} f = \mathcal{E} (\delta + f)
  \colon \mathcal{E} X \rightarrow \R
\)
for any $\delta \in \R$.

  There is a natural embedding of $X$ into the epigraph of $f$, given by
  $\kappa_X \colon X \rightarrow X \times [0, \infty) = \mathcal{E} X,\,
  p \mapsto (p, 0)$.  
  It induces a map
    \begin{equation*}
    i_G \colon G
    \xto{\kappa_G} \mathcal{E} G
    \xto{q_{\mathcal{R} \mathcal{E} G}} \mathcal{R} \mathcal{E} G ,
  \end{equation*}
  which preserves function values in the sense that $\mathcal{R} \mathcal{E} f  \circ i_G  = f$. 
\begin{figure}[t]
  \centering
  \caption{
    The embedding $i_G \colon G \rightarrow \mathcal{R} \mathcal{E} G$
    of a merge tree $G$
    into its unbounded variant
    $\mathcal{R} \mathcal{E} G$.
  }
  \label{fig:mergeTreeEmbedding}
\end{figure}
  This map can be used to give the following alternative characterization of merge trees.

\begin{restatable}{proposition}{propCharMergeTrees}
  \label{prop:charMergeTrees}
  A Reeb graph $(G,g)$ is a merge tree
  iff 
  the map 
  $i_G \colon G \to \mathcal{R} \mathcal{E} G$
  is an embedding.  
\end{restatable}

\begin{proof}%
  Suppose $g \colon G \rightarrow \R$ is a Reeb graph
  that admits the structure of a CW complex with two distinct $1$-cells
  $e$ and $f$
  sharing their lower boundary point,
  and let $x$ and $y$ be interior points of $e$ and $f$, respectively,
  with $g(x) = g(y)$.
  Then
  $i_G(x) =
  i_G(y)$,
  and hence $q_{\mathcal{R} \mathcal{E} G} \circ \kappa_G$ is not injective.
  Thus, if $q_{\mathcal{R} \mathcal{E} G} \circ \kappa_G$ is an embedding,
  then $g \colon G \rightarrow \R$ is a merge tree.
  
  Now suppose that $g \colon G \rightarrow \R$ is a merge tree.
To show that $i_G$ is an embedding, it suffices to show that it is injective and open onto its image.

  \begin{claim}
    \label{claim:discreteFibers}
    The fibers of the induced map
    $\mathcal{R} \mathcal{E} g \colon \mathcal{R} \mathcal{E} G \rightarrow \R$
    are discrete.
  \end{claim}

  \begin{claimproof}
    As the sublevel sets of $g \colon G \rightarrow \R$ are locally connected,
    the function $\mathcal{E} g \colon \mathcal{E} G \rightarrow \R$
    has locally connected fibers.
    By \cite[Proposition 2.2]{bauer_edit} this implies
    that the fibers of
    $\mathcal{R} \mathcal{E} g \colon \mathcal{R} \mathcal{E} G \rightarrow \R$
    are discrete.    
  \end{claimproof}
  
  \begin{claim}
    \label{claim:Hausdorff}
    The quotient space $\mathcal{R} \mathcal{E} G$ is Hausdorff.
  \end{claim}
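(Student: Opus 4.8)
The plan is to separate two given distinct points $a \neq b$ of $\mathcal{R}\mathcal{E}G$ by disjoint \emph{saturated} open subsets of $\mathcal{E}G$; as $q_{\mathcal{R}\mathcal{E}G}$ is a quotient map, their images will then be disjoint open neighbourhoods of $a$ and $b$. If $(\mathcal{R}\mathcal{E}g)(a) \neq (\mathcal{R}\mathcal{E}g)(b)$, I would just take the $\mathcal{E}g$-preimages of two disjoint open intervals around these values. Otherwise $(\mathcal{R}\mathcal{E}g)(a) = (\mathcal{R}\mathcal{E}g)(b) = s$, and then $q_{\mathcal{R}\mathcal{E}G}^{-1}(a)$ and $q_{\mathcal{R}\mathcal{E}G}^{-1}(b)$ are distinct connected components $A$ and $B$ of the fibre $(\mathcal{E}g)^{-1}(s)$. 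Via the homeomorphism $(\mathcal{E}g)^{-1}(s) \to g^{-1}(-\infty,s]$, $(p,t)\mapsto p$ (with inverse $p\mapsto(p,s-g(p))$), I would identify the projections of $A$ and $B$ with distinct connected components $K$ and $L$ of the sublevel set $g^{-1}(-\infty, s]$.

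Using the local finiteness in \cref{def:ReebGraph} — only finitely many cells, hence finitely many $0$-cells, meet $g^{-1}([s-1,s+1])$ — I would pick $\epsilon \in (0,1)$ such that no $0$-cell has value in $[s-\epsilon,s+\epsilon]\setminus\{s\}$. The key step is to show that $K$ and $L$ still lie in distinct connected components $\widehat K$ and $\widehat L$ of the larger sublevel set $g^{-1}(-\infty, s+\epsilon)$. My argument: since $g^{-1}((s,s+\epsilon))$ contains no $0$-cell, each of its connected components is an open sub-arc of a single $1$-cell on whose closure $g$ is monotone, and the absence of $0$-cells with value strictly between $s$ and $s+\epsilon$ forces the lower endpoint of that closure to lie at level exactly $s$ and the upper one at level $s+\epsilon$, hence outside $g^{-1}(-\infty,s+\epsilon)$. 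So, inside $g^{-1}(-\infty,s+\epsilon)$, every component of $g^{-1}((s,s+\epsilon))$ is a half-open ``whisker'' meeting $g^{-1}(-\infty,s]$ in a single point of $g^{-1}(s)$. Consequently $g^{-1}(-\infty,s+\epsilon)$ deformation retracts onto $g^{-1}(-\infty,s]$ (retract each whisker onto its foot), so these two spaces have the same connected components and in particular $\widehat K \neq \widehat L$.

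With $\widehat K$ and $\widehat L$ in hand — these are open in $G$, being connected components of an open subset of the locally connected space $G$ — I would set $U := (\widehat K\times[0,\infty))\cap(\mathcal{E}g)^{-1}((s-\epsilon,s+\epsilon))$ and define $V$ symmetrically from $\widehat L$. These are open in $\mathcal{E}G=G\times[0,\infty)$ since $\mathcal{E}g$ is continuous. They are saturated: for $(p,t)\in U$, the connected component of $(p,t)$ in $(\mathcal{E}g)^{-1}(g(p)+t)$ projects to a connected subset of $g^{-1}(-\infty,g(p)+t]\subseteq g^{-1}(-\infty,s+\epsilon)$ containing $p\in\widehat K$, hence contained in $\widehat K$, so the whole component lies in $U$; likewise for $V$. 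They are disjoint because a point of $U\cap V$ would project into $\widehat K\cap\widehat L=\emptyset$. And $A\subseteq U$, $B\subseteq V$, since $A$ projects into $K\subseteq\widehat K$ with $\mathcal{E}g\equiv s\in(s-\epsilon,s+\epsilon)$ on $A$, and symmetrically for $B$. Then $q_{\mathcal{R}\mathcal{E}G}(U)$ and $q_{\mathcal{R}\mathcal{E}G}(V)$ are the desired disjoint open neighbourhoods of $a$ and $b$.

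I expect the whisker argument of the second paragraph to be the step requiring the most care: its content is that two distinct components of the level-$s$ sublevel set cannot merge immediately above level $s$, and this rests entirely on the local finiteness condition, which keeps any $0$-cell — in particular any ``merge vertex'' — away from $s$ on the upper side. (Note that only the Reeb-graph axioms, not the merge-tree hypothesis, enter this particular claim.)
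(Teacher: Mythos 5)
Your proof is correct, but it takes a genuinely different route from the paper. The paper disposes of this claim in three lines by citing an external result: local connectedness of the sublevel sets of $g$ gives local connectedness of the fibers of $\mathcal{E}g$, and then \cite[Proposition~2.2]{bauer_edit} is invoked to conclude that the fibers of $\mathcal{R}\mathcal{E}g$ are discrete and $\mathcal{R}\mathcal{E}G$ is Hausdorff. You instead give a self-contained separation argument from first principles: you build disjoint saturated open neighbourhoods of two distinct fibers of $q_{\mathcal{R}\mathcal{E}G}$ directly out of the CW structure, using the local finiteness axiom in \cref{def:ReebGraph} to choose the width $\epsilon$ of a level band free of $0$-cells, and the resulting ``whisker'' picture to show that distinct components of the sublevel set $g^{-1}(-\infty,s]$ stay distinct in $g^{-1}(-\infty,s+\epsilon)$. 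All the individual steps check out: the fiber $(\mathcal{E}g)^{-1}(s)$ is indeed homeomorphic to the sublevel set, the chosen sets $U$, $V$ are open by local connectedness of $G$, saturation follows because any connected component of a fiber at level $s'<s+\epsilon$ projects into a connected subset of $g^{-1}(-\infty,s+\epsilon)$ meeting $\widehat K$ and hence lying in $\widehat K$, and disjointness is immediate. (In fact you only need that the inclusion $g^{-1}(-\infty,s]\hookrightarrow g^{-1}(-\infty,s+\epsilon)$ induces a bijection on connected components, which is a bit weaker than the deformation retraction you assert, but the retraction claim is also valid given the finitely-many-whiskers picture.) The trade-off is clear: the paper's proof is short but opaque without consulting the cited reference, while yours is longer but elementary and directly exhibits why local finiteness forces the Reeb preimage to be Hausdorff. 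Both arguments, as you correctly observe, use only the Reeb graph axioms and not the merge tree hypothesis.
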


  \begin{claimproof}
    We have to show that any two distinct points
    in the same fiber $(\mathcal{R} \mathcal{E} g)^{-1}(t)$
    for some $t \in \R$
    admit disjoint neighbourhoods in $\mathcal{R} \mathcal{E} G$.
    To this end,
    we fix the structure of a CW complex on $G$
    as in \cref{def:ReebGraph}.
    Then the interlevel set
    $g^{-1} [t, t+1]$
    intersects a finite number of cells of this CW structure.
    Thus, there is a real number $u > t$ such that any cell intersecting
    $g^{-1} [t, u)$
    also intersects the fiber $g^{-1} (t)$.
    As a result,
    there is a retraction
    $r \colon g^{-1} [t, u) \rightarrow g^{-1} (t)$
    of $g^{-1} [t, u)$ onto $g^{-1} (t)$.
    From this retraction $r$
    we construct the retraction
    \begin{equation*}
      R \colon
      (\mathcal{E} g)^{-1} (-\infty, u) \rightarrow
      (\mathcal{E} g)^{-1} (t), \,
      (p, s) \mapsto
      \begin{cases}
        (p, t-g(p)) & p \in g^{-1} (-\infty, t] \\
        (r(p), 0) & p \in g^{-1} [t, u)
        .
      \end{cases}
    \end{equation*}
    As
    $(g^{-1} (-\infty, t] \times [0, \infty)) \cap
    (\mathcal{E} g)^{-1} (-\infty, u)$
    and
    $(g^{-1} [t, u) \times [0, \infty)) \cap
    (\mathcal{E} g)^{-1} (-\infty, u)$
    provide a closed cover
    of $(\mathcal{E} g)^{-1} (-\infty, u)$,
    the map
    $R \colon
    (\mathcal{E} g)^{-1} (-\infty, u) \rightarrow
    (\mathcal{E} g)^{-1} (t)$
    is indeed continuous.
    We consider the commutative diagram
    \begin{equation}
      \label{eq:HausdorffDiagram}
      \begin{tikzcd}[row sep=7ex]
        (\mathcal{E} g)^{-1} (-\infty, u)
        \arrow[rrr, "R"]
        \arrow[rd,
        "q_{\mathcal{R} \mathcal{E} G}"]%
        \arrow[dd, "\mathcal{E} g"']
        & & &
        (\mathcal{E} g)^{-1} (t)
        \arrow[rd, "q_{\mathcal{R} \mathcal{E} G}"]
        \arrow[dd, "\mathcal{E} g"' near start]
        \\
        &
        (\mathcal{R} \mathcal{E} g)^{-1} (-\infty, u)
        \arrow[rrr, crossing over, "\overline{R}" near start, dashed]
        \arrow[ld, "\mathcal{R} \mathcal{E} g"]
        & & &
        (\mathcal{R} \mathcal{E} g)^{-1} (t)
        \arrow[ld, "\mathcal{R} \mathcal{E} g"]
        \\
        (-\infty, u)
        \arrow[rrr]
        & & &
        \{t\}
        .
      \end{tikzcd}
    \end{equation}
    As the open subset
    $(\mathcal{E} g)^{-1} (-\infty, u) \subset \mathcal{E} G$
    is closed under the equivalence relation
    $\sim_{\mathcal{E} g}$
    from \cref{def:indReebGraph}
    the restricted map
    \begin{equation*}
      (\mathcal{E} g)^{-1} (-\infty, u)
      \xrightarrow{~q_{\mathcal{R} \mathcal{E} G}~}
      (\mathcal{R} \mathcal{E} g)^{-1} (-\infty, u)
    \end{equation*}
    is a quotient map as well.
    As a result, the induced map on quotients
    \[\overline{R} \colon
    (\mathcal{R} \mathcal{E} g)^{-1} (-\infty, u) \rightarrow
    (\mathcal{R} \mathcal{E} g)^{-1} (t)\]
    is continuous.
    Moreover, $\overline{R}$ is a retraction
    by the commutativity of \eqref{eq:HausdorffDiagram}.
    Now suppose we have $x \neq y \in (\mathcal{R} \mathcal{E} g)^{-1} (t)$.
    As $(\mathcal{R} \mathcal{E} g)^{-1} (t)$ is discrete
    by \cref{claim:discreteFibers},
    the singleton sets
    $\{x\}$ and $\{y\}$ are open in $(\mathcal{R} \mathcal{E} g)^{-1} (t)$.
    As a result, the preimages
    $\overline{R}^{-1} (x)$ and
    $\overline{R}^{-1} (y)$
    are disjoint open neighbourhoods of $x$ and $y$
    in $\mathcal{R} \mathcal{E} G$.
  \end{claimproof}

  \begin{claim}
    \label{claim:injective}
    The map $i_G \colon G \to \mathcal{R} \mathcal{E} G$
    is injective.
  \end{claim}

  \begin{claimproof}
First we fix the structure of a CW complex on $G$ as in \cref{def:mergeTree}.
Now let $t \in \R$ and let $x, y \in g^{-1}(t)$ with $i_G(x) = i_G(y)$.
We have to show that $x = y$.
By possibly subdividing two $1$-cells, we can assume that $x$ and $y$ are $0$-cells.
Since $i_G(x) = i_G(y)$, the points $x$ and $y$ are connected in $g^{-1} (-\infty, t]$.
As $g^{-1} (-\infty, t]$ is locally path-connected by \cref{rem:pathConn}, this implies that there is a path $\gamma \colon [0, 1] \rightarrow g^{-1} (-\infty, t]$ from $x$ to $y$.
Moreover, $g^{-1} ((g \circ \gamma)[0,1])$ intersects a finite number of cells by \cref{def:ReebGraph}.
Thus, we can choose a finite subcomplex $G'$ of $G$ that contains $\im \gamma$.
Since $x$ and $y$ are connected in $G'$, they are connected when we view $G'$ as an undirected graph with the $0$-cells as vertices and the $1$-cells as edges.
Let $\gamma'$ be a minimal path from $x$ to $y$ in this undirected graph.
Since edges in a Reeb graph always connect vertices with different values under $g$ and there is no vertex $z$ with $g(z)>g(x)=g(y)=t$, there must be a pair of incident edges $(u,v)$ and $(v,w)$ with $g(u)>g(v)<g(w)$.
But this contradicts the definition of merge tree.
  \end{claimproof}

  \begin{claim}
  \label{claim:open}
  $i_G$ is an open map onto its image.
  \end{claim}
  
  \begin{claimproof}
  To show that $i_G$ is open onto its image, it suffices to show that for any open $U$ with bounded range $g(U)$, the image $i_G(U)$ is open in $\im i_G$, as those open sets form a basis of the topology.

  Now $U$ is an open subset of $g^{-1}(a,b)$ of $G$ for some $a<b$.
  Since $g^{-1}[a,b]$ is closed and intersects only finitely many cells of $G$, it is compact.
  Thus, as $\mathcal{R} \mathcal{E} G$ is Hausdorff by \cref{claim:Hausdorff} and $i_G$ is injective by \cref{claim:injective}, the restriction of $i_G$ to $g^{-1}[a,b]$ is an embedding.
  This means that $i_G(U)$ is open in $i_G(g^{-1}[a,b]) = i_G(G) \cap (\mathcal{R} \mathcal{E} g)^{-1}[a,b]$, where the equality comes from $i_G$ being function-preserving.
  It follows that $i_G(U)$ is open in $i_G(G)\cap (\mathcal{R} \mathcal{E} g)^{-1}(a,b)$ and thus also in $i_G(G)$, as $(\mathcal{R} \mathcal{E} g)^{-1}(a,b)$ is open in $\mathcal{R} \mathcal{E} G$.
\end{claimproof}

  By \cref{claim:open} the map $i_G$
  is open onto its image, and by \cref{claim:injective} it is injective,
  hence it is an embedding.
\end{proof}

\subsection{Bounded and unbounded merge trees}

The map $i_G$ already provides an embedding of any bounded merge tree $G$ into the unbounded merge tree $\mathcal{R} \mathcal{E} G$. We complement this construction with a retraction from $\mathcal{R} \mathcal{E} G$ to $G$.

First suppose that $(G,g)$ is a Reeb graph,
and let $m := \sup_{p \in G} g(p) \in (-\infty, \infty]$.
We define the map
\[\tilde{\rho}_G \colon \mathcal{E} G \rightarrow \mathcal{E} G,\,
  (p, t) \mapsto (p, \min \{t, m-g(p)\})
,\]
which makes the diagram
\begin{equation*}
  \begin{tikzcd}[column sep=8ex]
    (\mathcal{E} g)^{-1} (-\infty, m]
    \arrow[r, equals]
    \arrow[d, hook]
    &
    (\mathcal{E} g)^{-1} (-\infty, m]
    \arrow[d, hook]
    \\
    \mathcal{E} G
    \arrow[r,dashed, "\tilde{\rho}_G"]
    \arrow[d, "\mathcal{E} g"']
    &
    \mathcal{E} G
    \arrow[d, "\mathcal{E} g"]
    \\
    \R
    \arrow[r, "{\min \{-, m\}}"']
    &
    \R
  \end{tikzcd}
\end{equation*}
commute.
We state the following immediate consequence of this definition. 
\begin{lemma}
  \label{lem:rhoTwiddleFibers}
  For each $t \in \R$
  the map
  $\tilde{\rho}_G \colon \mathcal{E} G \rightarrow \mathcal{E} G$
  restricts to a homeomorphism
  between the fibers $(\mathcal{E} g)^{-1}(t)$
  and $(\mathcal{E} g)^{-1}(\min \{t, m\})$.
\end{lemma}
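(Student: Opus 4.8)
The plan is to realize the restricted map as a composite of homeomorphisms coming from the projection $\pr_G \colon \mathcal{E} G = G \times [0,\infty) \to G$. First I would record that for any $s \in \R$ this projection restricts to a homeomorphism from the fiber $(\mathcal{E} g)^{-1}(s)$ onto $g^{-1}(-\infty, s]$, with continuous inverse $p \mapsto (p, s - g(p))$: indeed $(\mathcal{E} g)^{-1}(s) = \{(p, s - g(p)) \mid g(p) \leq s\}$, and both maps are continuous because $g$ is. Since $m = \sup_{p \in G} g(p)$, we have $g(p) \leq m$ for every $p \in G$, and hence $g^{-1}(-\infty, \min\{t, m\}] = g^{-1}(-\infty, t]$ --- trivially when $t \leq m$, and because both sides equal $G$ when $t > m$. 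So $\pr_G$ identifies both fibers $(\mathcal{E} g)^{-1}(t)$ and $(\mathcal{E} g)^{-1}(\min\{t, m\})$ homeomorphically with one and the same subspace $g^{-1}(-\infty, t] \subseteq G$.

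Next I would invoke the bottom commuting square preceding the lemma, which says $\mathcal{E} g \circ \tilde{\rho}_G = \min\{-, m\} \circ \mathcal{E} g$; this immediately gives that $\tilde{\rho}_G$ maps $(\mathcal{E} g)^{-1}(t)$ into $(\mathcal{E} g)^{-1}(\min\{t, m\})$. Because $\tilde{\rho}_G$ preserves the first coordinate, we also have $\pr_G \circ \tilde{\rho}_G = \pr_G$ on all of $\mathcal{E} G$, and in particular on the fiber over $t$. Therefore the restriction of $\tilde{\rho}_G$ to $(\mathcal{E} g)^{-1}(t)$ equals the homeomorphism $\pr_G \colon (\mathcal{E} g)^{-1}(t) \to g^{-1}(-\infty, t]$ followed by the inverse of the homeomorphism $\pr_G \colon (\mathcal{E} g)^{-1}(\min\{t, m\}) \to g^{-1}(-\infty, t]$, and is thus itself a homeomorphism.

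There is no genuine obstacle here; as the excerpt says, this is an immediate consequence of the definition. The only points requiring a moment's care are the degenerate cases --- $m = \infty$, where $\tilde{\rho}_G$ is literally the identity map and $\min\{t, m\} = t$, and values $t < \inf_{p \in G} g(p)$, where both fibers are empty --- and keeping straight that the supremum condition is exactly what forces $g^{-1}(-\infty, \min\{t, m\}]$ and $g^{-1}(-\infty, t]$ to coincide.
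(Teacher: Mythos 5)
Your proof is correct and complete. The paper gives no explicit argument here (it simply declares the lemma "an immediate consequence of this definition"), so there is nothing to compare against; your factorization through $\pr_G$, using that $g^{-1}(-\infty,\min\{t,m\}] = g^{-1}(-\infty,t]$ since $m = \sup_{p\in G} g(p)$, is a clean and fully rigorous way to spell out what the authors leave implicit. The observations that $\pr_G \circ \tilde\rho_G = \pr_G$ and that both fibers project homeomorphically onto the common sublevel set $g^{-1}(-\infty,t]$ are exactly the two ingredients one needs, and you have handled the edge cases ($m = \infty$, empty fibers) appropriately.
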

By the universal property of the quotient topology,
there is a unique continuous map
${\mathcal{R} \tilde{\rho}_G \colon
\mathcal{R} \mathcal{E} G \rightarrow \mathcal{R} \mathcal{E} G}$
making the following diagram commute:
\begin{equation}
  \label{eq:rhoTwiddleDgm}
  \begin{tikzcd}[row sep=7ex]
    \mathcal{E} G
    \arrow[rrr, "\tilde{\rho}_G"]
    \arrow[rd, "q_{\mathcal{R} \mathcal{E} G}"]
    \arrow[dd, "\mathcal{E} g"']
    & & &
    \mathcal{E} G
    \arrow[rd, "q_{\mathcal{R} \mathcal{E} G}"]
    \arrow[dd, "\mathcal{E} g"' near start]
    \\
    &
    \mathcal{R} \mathcal{E} G
    \arrow[rrr, crossing over, "\mathcal{R} \tilde{\rho}_G" near start]
    \arrow[ld, "\mathcal{R} \mathcal{E} g"]
    & & &
    \mathcal{R} \mathcal{E} G
    \arrow[ld, "\mathcal{R} \mathcal{E} g"]
    \\
    \R
    \arrow[rrr, "{\min \{-, m\}}"']
    & & &
    \R
  \end{tikzcd}
\end{equation}

\begin{corollary}
  \label{lem:rhoReebFibers}
  For each $t \in \R$
  the map
  $\mathcal{R} \tilde{\rho}_G \colon
  \mathcal{R} \mathcal{E} G \rightarrow \mathcal{R} \mathcal{E} G$
  restricts to a bijection
  between the fibers $(\mathcal{R} \mathcal{E} g)^{-1}(t)$
  and $(\mathcal{R} \mathcal{E} g)^{-1}(\min \{t, m\})$.
\end{corollary}

\begin{restatable}{lemma}{lemImages}
  \label{lem:images}
  The images of the maps 
  $i_G \colon G \to \mathcal{R} \mathcal{E} G$ and $\mathcal{R}\tilde{\rho}_G \colon \mathcal{R} \mathcal{E} G \to \mathcal{R} \mathcal{E} G$
  are identical.
\end{restatable}

\begin{proof}
  By definition of $m = \sup_{p \in G} g(p)$,
  the image of the map $i_G \colon G \to \mathcal{R} \mathcal{E} G$
  is contained in the image of $\mathcal{R} \tilde{\rho}_G$.
  Now we consider a point
  $q_{\mathcal{R} \mathcal{E} G} (p, t) \in \mathcal{R} \mathcal{E} G$
  with
  $m > (\mathcal{R} \mathcal{E} g)(q_{\mathcal{R} \mathcal{E} G} (p, t)) =
  (\mathcal{E} g)(p, t) = g(p) + t$.
  We have to show that there is a point $\tilde{p} \in G$ with
  $i_G(\tilde{p}) =
  q_{\mathcal{R} \mathcal{E} G} (p, t)$.
  To this end,
  let $p' \in g^{-1} [g(p) + t, m]$.
  Since $G$ is connected, there is a path $\gamma$
  from $p$ to $p'$.
  Now let $\tilde{p}$ be the first point on $\gamma$
  with $g(\tilde{p}) = g(p) + t$.
  Then the segment of $\gamma$ connecting $p$ and $\tilde{p}$
  is entirely contained in $g^{-1} (-\infty, g(p) + t]$,
  and thus
  $i_G(\tilde{p}) =
  q_{\mathcal{R} \mathcal{E} G} (p, t)$.
  In case $m = \infty$ this completes our proof.
  Now suppose $m \in \R$.
  As $G$ admits the structure of a CW complex
  with the properties from \cref{def:ReebGraph},
  the function $g \colon G \rightarrow \R$ is proper,
  and hence there is a point $p \in G$ with $g(p) = m$.
  Moreover, as $G$ is connected, we have
  $(\mathcal{R} \mathcal{E} g)^{-1} (m) =
  \{i_G(p)\}$.
\end{proof}

Now suppose that $(G,g)$ is a merge tree.
  The map $i_G$
  is non-surjective
  iff $g \colon G \rightarrow \R$ is bounded above.
  We define
  $\rho_G \colon \mathcal{R} \mathcal{E} G \rightarrow G$
  to be the unique continuous map
  -- which exists by \cref{lem:images,prop:charMergeTrees} --
  making the diagram
  \begin{equation}
    \label{eq:rhoDgm}
    \begin{tikzcd}[row sep=5ex, column sep=8ex]
      G
      \arrow[d, "\kappa_G"']
      &
      \mathcal{R} \mathcal{E} G
      \arrow[l, dashed, "\rho_G"']
      \arrow[d, "\mathcal{R} \tilde{\rho}_G"]
      \\
      \mathcal{E} G
      \arrow[r, "q_{\mathcal{R} \mathcal{E} G}"']
      &
      \mathcal{R} \mathcal{E} G
    \end{tikzcd}
  \end{equation}
 commute.
 As an immediate corollary of \cref{lem:rhoReebFibers}, we obtain the following observation.
\begin{corollary}
  \label{lem:rhoFibers}
  For each $t \in \R$,
  the map
  $\rho_G \colon \mathcal{R} \mathcal{E} G \rightarrow G$
  restricts to a bijection
  between the fibers $(\mathcal{R} \mathcal{E} g)^{-1}(t)$
  and $g^{-1}(\min \{t, m\})$.
\end{corollary}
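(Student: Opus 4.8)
The plan is to transport the bijection of \cref{lem:rhoReebFibers} along the map $i_G$, using that $i_G$ is an embedding because $(G,g)$ is a merge tree. First I would record the two fiber-compatibility identities that drop straight out of the defining diagrams. From diagram~\eqref{eq:rhoDgm} together with $i_G = q_{\mathcal{R}\mathcal{E} G}\circ\kappa_G$ one reads off $i_G\circ\rho_G = (\mathcal{R}\circ\tilde{\rho})_G$; and since $\mathcal{E} g\circ\kappa_G = g$ and $\mathcal{R}\mathcal{E} g\circ q_{\mathcal{R}\mathcal{E} G} = \mathcal{E} g$, one also has $\mathcal{R}\mathcal{E} g\circ i_G = g$. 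Composing the first identity with $\mathcal{R}\mathcal{E} g$ and feeding in the bottom square of~\eqref{eq:rhoTwiddleDgm} gives $g\circ\rho_G = \min\{-,m\}\circ\mathcal{R}\mathcal{E} g$, so $\rho_G$ does carry $(\mathcal{R}\mathcal{E} g)^{-1}(t)$ into $g^{-1}(\min\{t,m\})$, as claimed.

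Next I would check that for every $s\le m$ the map $i_G$ restricts to a \emph{bijection} $g^{-1}(s)\to(\mathcal{R}\mathcal{E} g)^{-1}(s)$. Injectivity is clear since $i_G$ is an embedding by \cref{lem:charMergeTrees}; it lands in the fiber over $s$ by the identity $\mathcal{R}\mathcal{E} g\circ i_G = g$; and surjectivity follows from \cref{lem:images,lem:rhoReebFibers}: by \cref{lem:images} we have $\im(i_G) = \im\bigl((\mathcal{R}\circ\tilde{\rho})_G\bigr)$, and by \cref{lem:rhoReebFibers} the map $(\mathcal{R}\circ\tilde{\rho})_G$ sends $(\mathcal{R}\mathcal{E} g)^{-1}(s)$ onto $(\mathcal{R}\mathcal{E} g)^{-1}(\min\{s,m\}) = (\mathcal{R}\mathcal{E} g)^{-1}(s)$, so every point of $(\mathcal{R}\mathcal{E} g)^{-1}(s)$ lies in $\im(i_G)$; its unique $i_G$-preimage then lies in $g^{-1}(s)$ by the fiber identity again.

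With these in hand the conclusion is immediate. Restricted to $(\mathcal{R}\mathcal{E} g)^{-1}(t)$ we can write $\rho_G = (i_G)^{-1}\circ(\mathcal{R}\circ\tilde{\rho})_G$, which exhibits $\rho_G$ on that fiber as the composite of the bijection $(\mathcal{R}\circ\tilde{\rho})_G\colon(\mathcal{R}\mathcal{E} g)^{-1}(t)\to(\mathcal{R}\mathcal{E} g)^{-1}(\min\{t,m\})$ of \cref{lem:rhoReebFibers} with the inverse of the bijection $i_G\colon g^{-1}(\min\{t,m\})\to(\mathcal{R}\mathcal{E} g)^{-1}(\min\{t,m\})$ established in the previous step (applicable because $\min\{t,m\}\le m$). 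Hence $\rho_G$ restricts to a bijection $(\mathcal{R}\mathcal{E} g)^{-1}(t)\to g^{-1}(\min\{t,m\})$.

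I do not anticipate a genuine obstacle here; the only thing to be careful about is the bookkeeping of which fiber each map lands in, in particular applying the image equality of \cref{lem:rhoReebFibers} at the value $s = \min\{t,m\}$ so that $\min\{s,m\} = s$. Everything else is a formal diagram chase once \cref{lem:images}, \cref{lem:charMergeTrees}, and \cref{lem:rhoReebFibers} are available, which is exactly why the statement is phrased as an immediate corollary.
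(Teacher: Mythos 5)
Your proof is correct, and since the paper itself presents this as an immediate corollary of \cref{lem:rhoReebFibers} without writing out an argument, it follows the only natural route: read off $i_G\circ\rho_G=(\mathcal{R}\circ\tilde{\rho})_G$ and $\mathcal{R}\mathcal{E}g\circ i_G=g$ from the defining diagrams, use \cref{lem:images} and \cref{lem:charMergeTrees} to get that $i_G$ restricts to a bijection $g^{-1}(s)\to(\mathcal{R}\mathcal{E}g)^{-1}(s)$ for $s\le m$, and then transport the fiberwise bijection of \cref{lem:rhoReebFibers} through $i_G^{-1}$. This is exactly the intended argument, spelled out carefully.
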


\subsection{Interleavings, contortions, and merge trees}
\label{sec:interleaving_cont_merge}

Let $f \colon X \rightarrow \R$ be an arbitrary continuous function
and let $\delta \geq 0$.
We define the embedding
\[\kappa^{\delta}_X \colon \mathcal{T}_{\delta} X \rightarrow \mathcal{E} X,\,
  (p, t) \mapsto (p, t+\delta),\]
which makes the diagram
\begin{equation*}
  \begin{tikzcd}[column sep=7ex]
    \mathcal{T}_{\delta} X
    \arrow[r, "\kappa^{\delta}_X"]
    \arrow[d, "\mathcal{T}_{\delta} f"']
    &
    \mathcal{E} X
    \arrow[d, "\mathcal{E} f"]
    \\
    \R
    \arrow[r, "(-)+\delta"']
    &
    \R
  \end{tikzcd}
\end{equation*}
commute.
Now let $(G, g)$ be a merge tree.
By the universal property of the quotient topology,
there is a unique continuous map
${\mathcal{R} \kappa^{\delta}_G \colon
\mathcal{U}_{\delta} G \rightarrow \mathcal{R} \mathcal{E} G}$
making the diagram
\begin{equation}
  \label{eq:kappaDeltaDgm}
  \begin{tikzcd}[row sep=7ex]
    \mathcal{T}_{\delta} G
    \arrow[rrr, "\kappa^{\delta}_G"]
    \arrow[rd, "q_{\mathcal{U}_{\delta} G}"]
    \arrow[dd, "\mathcal{T}_{\delta} g"']
    & & &
    \mathcal{E} G
    \arrow[rd, "q_{\mathcal{R} \mathcal{E} G}"]
    \arrow[dd, "\mathcal{E} g"' near start]
    \\
    &
    \mathcal{U}_{\delta} G
    \arrow[rrr, crossing over,
    "\mathcal{R} \kappa^{\delta}_G" near start]
    \arrow[ld, "\mathcal{U}_{\delta} g"]
    & & &
    \mathcal{R} \mathcal{E} G
    \arrow[ld, "\mathcal{R} \mathcal{E} g"]
    \\
    \R
    \arrow[rrr, "(-)+\delta"']
    & & &
    \R
  \end{tikzcd}
\end{equation}
commute.

\begin{restatable}{lemma}{lemKappaReebInj}
  \label{lem:kappaReebInj}
  The map
  ${\mathcal{R} \kappa^{\delta}_G \colon
    \mathcal{U}_{\delta} G \rightarrow \mathcal{R} \mathcal{E} G}$
  is injective.
\end{restatable}

\begin{proof}
In the following commutative diagram, $\kappa_G$ is the composition of the top horizontal maps, so we get $i_G$ by following any path in the diagram from $G$ to $\mathcal{RE} G$.
Taking the lower path, we get that $i_G$ factors through $\mathcal{R} \kappa^{\delta}_G$.
\begin{equation*}
\begin{tikzcd}[column sep=7ex]
G
\arrow[r, "(-{,}\delta)"]
&
\mathcal{T}_{\delta} G
\arrow[r, "\kappa^{\delta}_G"]
\arrow[d, "q_{\Uu_\delta G}"']
&
\mathcal{E} G
\arrow[d, "q_{\mathcal{RE} G}"]
\\
&
\mathcal U_\delta G
\arrow[r, "\mathcal R \kappa^{\delta}_G"']
&
\mathcal{RE} G
\end{tikzcd}
\end{equation*}
Since $i_G$ is injective by \cref{claim:injective}, it is enough to check that $\Rr\kappa^\delta_G(x)= \Rr\kappa^\delta_G(y)$ implies $x=y$ for $x\notin \im q_{\Uu_\delta G}\circ (-,\delta)$.
Let $t=\Uu_\delta g(x)$, $X = q_{\Uu_\delta G}^{-1}(x)$ and $Y = q_{\Uu_\delta G}^{-1}(y)$, and let $\pi_2\colon G\times \R \to \R$ be defined by $(p,t)\mapsto t$.
In this case, $X \subseteq \T_\delta g^{-1}(t)$ does not contain any point of the form $(\tilde x,\delta)$, so $X\subseteq \pi_2^{-1}[-\delta, \delta)$.
Moreover, by definition of $q_{\Uu_\delta G}$, $X$ is a connected component in $\T_\delta g^{-1}(t)$ and hence open and closed.

Note that $\kappa^\delta_G$ maps $\T_\delta g^{-1}(t)$ homeomorphically onto $\pi_2^{-1}[0, 2 \delta]\cap \Ee g^{-1}(t+\delta)$, and $\pi_2^{-1}[-\delta,\delta)\cap \T_\delta g^{-1}(t)$ homeomorphically onto
\[
\pi_2^{-1}[0, 2 \delta)\cap \Ee g^{-1}(t+\delta)=\pi_2^{-1}(-\infty, 2 \delta)\cap \Ee g^{-1}(t+\delta).
\]
Since $X$ is closed in $\T_\delta g^{-1}(t)$ and $\pi_2^{-1}[0, 2 \delta]\cap \Ee g^{-1}(t+\delta)$ is closed in $\Ee g^{-1}(t+\delta)$, the former means that $\kappa^\delta_G(X)$ is closed in $\Ee g^{-1}(t+\delta)$.
Since $X \subseteq \pi_2^{-1}[-\delta, \delta)$ is open in $\T_\delta g^{-1}(t)$ and $\pi_2^{-1}(-\infty, 2 \delta)\cap \Ee g^{-1}(t+\delta)$ is open in $\Ee g^{-1}(t+\delta)$, the latter means that $\kappa^\delta_G(X)$ is open in $\Ee g^{-1}(t+\delta)$.
Since $X$ is connected, its image $\kappa^\delta_G(X)$ is connected, open, and closed, and hence a connected component of $\Ee g^{-1}(t+\delta)$.
Since $\Rr\kappa^\delta_G(x)= \Rr\kappa^\delta_G(y)$, this implies $\kappa^\delta_G(Y)\subseteq \kappa^\delta_G(X)$ and thus $Y\subseteq X$ by injectivity of $\kappa^\delta_G$.
It follows that $x=y$, which was what we wanted to prove.
\end{proof}

\begin{restatable}{lemma}{lemThickeningShift}
  \label{lem:thickeningShift}
  As in the previous subsection,
  let $m := \sup_{p \in G} g(p) \in (-\infty, \infty]$,
  let $p \in G$,
  and let $\tau = -\delta$
  or $\tau \in [-\delta, \delta]$ if $g(p) = m$.
  The composite map
  \begin{equation*}
    G \times [-\delta, \delta] = \mathcal{T}_{\delta} G
    \xto{q_{\mathcal{U}_{\delta} G}}
    \mathcal{U}_{\delta} G
    \xto{\mathcal{R} \kappa^{\delta}_G}
    \mathcal{R}\mathcal{E} G
    \xto{\rho_G}
    G
  \end{equation*}
  maps $(p, \tau)$ to $p$.
\end{restatable}

\begin{proof}%
  Let $p' \in G$ be the image of $(p, \tau)$
  under the composite map
  \begin{equation*}
    G \times [-\delta, \delta] = \mathcal{T}_{\delta} G
    \xto{q_{\mathcal{U}_{\delta} G}}
    \mathcal{U}_{\delta} G
    \xto{\mathcal{R} \kappa^{\delta}_G}
    \mathcal{R} \mathcal{E} G
    \xto{\rho_G}
    G
    .
  \end{equation*}
  We have to show that $p = p'$.
  As $g \colon G \rightarrow \R$ is a merge tree, the  map
  $i_G \colon G \to \mathcal{R} \mathcal{E} G$
  is an embedding by \cref{prop:charMergeTrees}.
  Thus,
  it suffices to show that $p$ and $p'$ are identified by
  $i_G$.
  To this end,
  we consider the commutative diagram
  \begin{equation*}
    \begin{tikzcd}[column sep=10ex, row sep=7ex]
      \mathcal{T}_{\delta} G
      \arrow[r, "\kappa^{\delta}_G"]
      \arrow[d, "q_{\mathcal{U}_{\delta} G}"']
      &
      \mathcal{E} G
      \arrow[r, "\tilde{\rho}_G"]
      \arrow[d, "q_{\mathcal{R} \mathcal{E} G}"]
      &
      \mathcal{E} G
      \arrow[d, "q_{\mathcal{R} \mathcal{E} G}"]
      \\
      \mathcal{U}_{\delta} G
      \arrow[r, "\mathcal{R} \kappa^{\delta}_G"']
      &
      \mathcal{R} \mathcal{E} G
      \arrow[r, "\mathcal{R} \tilde{\rho}_G"']
      \arrow[d, "\rho_G"']
      &
      \mathcal{R} \mathcal{E} G
      \\
      &
      G
      \arrow[r, "\kappa_G"']
      \arrow[ru, "i_G"']
      &
      \mathcal{E} G .
      \arrow[u, "q_{\mathcal{R} \mathcal{E} G}"']
    \end{tikzcd}
  \end{equation*}
  Now the descending stair on the left of the diagram
  maps $(p, \tau) \in \mathcal{T}_{\delta} G$ to $p' \in G$.
  So we have to show that chasing $(p, \tau)$
  from the upper left corner $\mathcal{T}_{\delta} G$
  to $\mathcal{R} \mathcal{E} G$
  at the end of the second line
  and chasing $p \in G$ from center south
  to $\mathcal{R} \mathcal{E} G$
  at the end of the second line
  we obtain the same element.
  Now $\kappa^{\delta}_G$ maps $(p, \tau)$
  to $(p, \tau + \delta)$
  and $\tilde{\rho}_G$ maps $(p, \tau + \delta)$
  to $(p, 0)$ by our assumptions on $\tau$.
  Furthermore, $(p, 0)$ is mapped to
  $q_{\mathcal{R} \mathcal{E} G} (p, 0) \in \mathcal{R} \mathcal{E} G$.
  Moreover, $i_G$ also maps $p$ to
  $q_{\mathcal{R} \mathcal{E} G} (p, 0) \in \mathcal{R} \mathcal{E} G$.
  Thus, $(p, \tau)$ and $p$ are mapped to the same element
  in $\mathcal{R} \mathcal{E} G$,
  hence $p' = p$.
\end{proof}

\begin{proof}[Proof of \cref{thm:merge}]
  Suppose $(F,f)$ and $(G,g)$
  are merge trees 
  and that
  \[\phi \colon F \rightarrow \mathcal{U}_{\delta} G
    \quad \text{and} \quad
    \psi \colon G \rightarrow \mathcal{U}_{\delta} F
  \]
  form a $\delta$-interleaving (of Reeb graphs).
  We show that the composite maps
  \begin{align*}
    \tilde{\phi} \colon
    & F \xto{\phi}
      \mathcal{U}_{\delta} G \xto{
      \mathcal{R} \kappa^{\delta}_G
      }
      \mathcal{R} \mathcal{E} G \xto{\rho_G}
      G
    , &
    \tilde{\psi} \colon
    & G \xto{\psi}
      \mathcal{U}_{\delta} F \xto{
      \mathcal{R} \kappa^{\delta}_F
      }
      \mathcal{R} \mathcal{E} F \xto{\rho_F}
      F
  \end{align*}
  form a $\delta$-contortion pair. Together with \cref{thm:contour}, this proves the claim.

  Let $x \in F$ and let $y \in \tilde{\psi}^{-1} (x)$.
  We have to show that
  $y$ and $\tilde{\phi}(x)$ are connected in $g^{-1} [f(x)\pm\delta]$.
  By the symmetry of \cref{def:contortionDistance}, this is also sufficient.
  By the commutativity of the lower parallelogram in \eqref{eq:kappaDeltaDgm}
  the value of
  $\mathcal{R} \kappa^{\delta}_F (\psi(y))$
  under $\mathcal{R} \mathcal{E} f$ is
  \[(\mathcal{U}_{\delta} f)(\psi(y)) + \delta = g(y) + \delta.\]
  In conjunction with the commutativity of \eqref{eq:rhoDgm}
  and the lower parallelogram in \eqref{eq:rhoTwiddleDgm}
  we obtain
  \[f(x) = \big(f \circ \tilde{\psi}\big)(y) = \min \{g(y) + \delta, m'\},\]
  where $m' := \sup_{p \in F} f(p)$,
  and hence
  \[f(x) - g(y) =
    \min \{g(y) + \delta, m'\} - g(y) =
    \min \{\delta, m' - g(y)\}.
  \]
  Moreover,
  $g(y) = (\mathcal{U}_{\delta} f) (\psi(y)) \leq m' + \delta$,
  so in conjunction with \cref{lem:thickeningShift} we obtain that
  \begin{equation*}
    F \times [-\delta, \delta] = \mathcal{T}_{\delta} F
    \xto{q_{\mathcal{U}_{\delta} F}}
    \mathcal{U}_{\delta} F
    \xto{\mathcal{R} \kappa^{\delta}_F}
    \mathcal{R}\mathcal{E} F
    \xto{\rho_F}
    F
  \end{equation*}
  maps $(x, g(y) - f(x))$ to $x$.
  Thus, the composite map
  \begin{equation*}
    \mathcal{U}_{\delta} F
    \xto{\mathcal{R} \kappa^{\delta}_F}
    \mathcal{R}\mathcal{E} F
    \xto{\rho_F}
    F
  \end{equation*}
  maps both
  $q_{\mathcal{U}_{\delta} F} (x, g(y) - f(x))$ and $\psi(y)$
  to $x$.
  By \cref{lem:kappaReebInj,lem:rhoFibers} this implies
  \begin{equation*}
    q_{\mathcal{U}_{\delta} F} (x, g(y) - f(x)) = \psi(y)
    .
  \end{equation*}
  Completely analogously we obtain that
  \(
    q_{\mathcal{U}_{\delta} G}
    \big(\tilde{\phi}(x), f(x) - \big(g \circ \tilde{\phi}\big)(x)\big) =
    \phi(x)
    .
  \)
  Thus, $y$~and~$\tilde{\phi}(x)$ are connected in
  $g^{-1} [g(y)\pm2\delta]$
  by \cref{def:interleaving}.
  It remains to show that
  $y$ and $\tilde{\phi}(x)$ are connected in $g^{-1} [f(x)\pm\delta]$.
  To this end,
  let $t := \min \{g(y) + 2 \delta, m\}$,
  where $m := \sup_{p \in G} g(p)$.

  \begin{claim}
    \label{claim:imageFiber}
    We have
    $\big(g \circ \tilde{\phi}\big)(x) = t$.
  \end{claim}

  \begin{claimproof}
    \sloppy
    We first consider the case
    ${\big(f \circ \tilde{\psi}\big)(y) = f(x) < m'}$.
    In this case, we have
    ${f(x) = \big(f \circ \tilde{\psi}\big)(y) = g(y) + \delta}$ and thus
    ${\big(g \circ \tilde{\phi}\big)(x) = t}$.
    Now suppose ${f(x) = m'}$.
    Since ${\tilde{\phi}(x) \in g^{-1} [g(y)\pm\delta]}$,
    we must have
    ${\big(g \circ \tilde{\phi}\big)(x) \leq t}$.
    Now suppose
    ${\big(g \circ \tilde{\phi}\big)(x) < t \leq m}$. Then ${\big(g \circ \tilde{\phi}\big)(x) = f(x) + \delta = m' + \delta}$.
    In particular, we have ${m' + \delta < t \leq m}$.
    Now let ${s \in (m' + \delta, m)}$.
    Then we have
    ${(\mathcal{U}_{\delta} f)^{-1} (s) = \emptyset}$ while
    ${g^{-1}(s) \neq \emptyset}$,
    a contradiction to the existence of the map
    ${\psi |_{g^{-1}(s)} \colon
      g^{-1}(s) \rightarrow (\mathcal{U}_{\delta} f)^{-1} (s)}$.
  \end{claimproof}

  We obtain the connectivity of
  $y$ and ${\tilde{\phi}(x)}$
  in ${g^{-1} [f(x)\pm\delta]}$
  from their connectivity in
  ${g^{-1} [g(y)\pm2\delta]}$
  by defining a retraction
  ${\sigma \colon g^{-1} [g(y)\pm2\delta] \rightarrow g^{-1}(t)}$
  using the map
  \begin{equation*}
    \tilde{\sigma} \colon
    (p, s) \mapsto (p, \max \{s, t - g(p)\})
    .
  \end{equation*}
  as a composition of maps
  \begin{equation*}
    \begin{tikzcd}
      g^{-1} [g(y)\pm2\delta]
      \arrow[d, hook]
      \\
      g^{-1} (-\infty, g(y) + 2\delta]
      \arrow[d, "\kappa_G"]
      \\
      (\mathcal{E} g)^{-1} (-\infty, g(y) + 2\delta]
      \arrow[d, "\tilde{\sigma}"]
      \\
      (\mathcal{E} g)^{-1} [t, g(y) + 2\delta]
      \arrow[d, "q_{\mathcal{R} \mathcal{E} G}"]
      \\
      (\mathcal{R} \mathcal{E} g)^{-1} [t, g(y) + 2\delta]
      \arrow[d, "\rho_G"]
      \\
      g^{-1}(t)
      .     
    \end{tikzcd}
  \end{equation*}
  By the definition of $\tilde{\sigma}$ the map
  ${\sigma \colon g^{-1} [g(y)\pm2\delta] \rightarrow g^{-1}(t)}$
  is indeed a retraction.
  As ${\tilde{\phi}(x) \in g^{-1}(t)}$ by \cref{claim:imageFiber}
  the points ${\sigma(y)}$ and ${\tilde{\phi}(x)}$
  are connected in the fiber ${g^{-1}(t)}$.
  Since the fibers of $g$ are discrete, this implies that
  ${\sigma(y) = \tilde{\phi}(x)}$.
  Defining the path
  \begin{equation*}
    \gamma \colon [0, 1] \rightarrow \mathcal{E} G, \,
    s \mapsto (y, s(t-g(y)))
  \end{equation*}
  the composition
  \begin{equation*}
    [0, 1] \xto{\gamma}
    \mathcal{E} G \xto{q_{\mathcal{R} \mathcal{E} G}}
    \mathcal{R} \mathcal{E} G \xto{\rho_G}
    G
  \end{equation*}
  yields a path from $y$ to $\sigma(y) = \tilde{\phi}(x)$
  in $g^{-1} [f(x)\pm\delta]$.
\end{proof}

\bibliography{Reebgd}

\end{document}